\documentclass[10pt]{article}
\usepackage[vmargin=2.5cm]{geometry}
\usepackage{graphicx} % Required for inserting images
\usepackage{amssymb}
\usepackage{amsthm}
\usepackage{amsmath}
\usepackage{mathbbol}
\usepackage{setspace}
\usepackage{parskip}
\newtheorem{theorem}{Theorem}[section]
\newtheorem{lemma}{Lemma}[section]
\newtheorem{remark}{Remark}[section]
\usepackage[numbers]{natbib}
\title{The Waring–Goldbach Problem in Short Intervals under the Generalized Riemann Hypothesis}
\author{Yaojie Guo}
\date{June 2026}
\begin{document}
\maketitle
\begin{abstract}
Let $N$ be a sufficiently large odd integer.  
We study the representation of a large integer $mN^k$ as a sum of $m$ almost equal $k$-th powers of primes, where the common distribution scale is at least $N^\theta$.  
Assuming the Generalized Riemann Hypothesis (GRH), we prove that for any $\epsilon > 0$, one may take $\theta > 1/2$, provided that $m < k(k+1)/2$ and that $m, k$ are odd integers sufficiently large.  This is achieved by employing a new tool, which is in fact closely related to earlier work. For sufficiently large number $m$, by refining the discussion of the minor arcs, we further show that under GRH, the equation has a solution provided  
\[
m > 16k\log k + 4k + 2.
\]
Moreover, the number of solutions is given by  
\[
I(N) = N^{2k\theta(1-(1-\frac{1}{k})^{m_1})+\theta(m-1)+1-k} \Delta(Q) \Psi(N) (\log N)^{-m_0},
\]  
where $m_1 = 4k$ and $m_0 = [8k\log k] + 1$.  
Under GRH and for sufficiently large number
$m,k$, the short-interval Waring–Goldbach problem is essentially settled.
\end{abstract}
\section{Introduction}
Suppose that $N,k$ and $m$ are sufficiently large integers and satisfy $N$ is far bigger than $m,k$. The Waring problem is to find the least integer $m\triangleq{G(k)}$ and partition $mN^k$ as follows:
\[  
{mN^k}=x^k_1+...+x^k_m\quad{where{\:x_i\in \mathrm{\textbf{Z}}}}
\]
There have been many valuable results in this field.Hua proved when $m>2^k+1$,the Diophantine equation is solvable.In 1992, T.D.Wooley proved that $G(k)\le k\log{k}+k\log{\log{k}}+O(k)$(\cite{wei2014sumspowersequalprimes})and made great progress in this field. With the improvement of Vinogradov's mean value theorem(\cite{bourgain2016proofmainconjecturevinogradovs}), the results of $G(k)$ can be promoted.
\\
Now we combine the classical Waring Problem with the Goldbach problem.Let $\tau=v_p(k)$,and we define $\gamma$ by taking
\[
\gamma=
\begin{cases}
\tau+2 & \mathrm{if}\,p=2\,\mathrm{and}\,\tau>0\\
\tau+1 & otherwise
\end{cases}
\]
and we define
\[
R_k=\prod\limits_{(p-1)|k}p^{\gamma}
\]
Specifically, the Waring-Goldbach Problem is to partition the integer $mN^k\equiv m(\mathrm{mod }R_k)$ into the sum of $m$ the $k$-th powers of primes in short interval:
\[  
{mN^k}=p^k_1+...+p^k_m \quad{where{\:p_i\in[N-N^{\theta},N+N^{\theta}]}}
\]
and $p_i$ is prime.Now we list some current results as follows. In 2015, Wei and Wooley \cite{wei2014sumspowersequalprimes} proved that when $s>$max\{$6,2k(2k-1)$\},
\[
\theta_{k,s}\leq
\begin{cases}
    \frac{19}{24}&\mathrm{if}\,k=2\\
    \frac{4}{5}&\mathrm{if}\,k=3\\
    \frac{5}{6}&\mathrm{if}\,k\leq4
\end{cases}
\]
Suppose $A>0$ and $\theta\in(0,1)$, their method is surrounded by two exponential sums as follows, where the first is used in major arc and the second is used in minor arc:
\begin{equation}
\sum\limits_{N<n<N+N^\theta}\Lambda(n)e(n^k\alpha)\ll_{k,A}\frac{N^\theta}{(\log{N})^A}
\end{equation}
and for any $\epsilon>0$ and $f(x)=\sum_{1\leq i\leq k}\alpha_{i}x^i$ we have
\[
\int_{[0,1]^k}|\sum\limits_{n=1}^{X}e(f(n))|^{2m}\mathrm{d}\alpha_1...\mathrm{d}\alpha_k\ll_{\epsilon}
\begin{cases}
   X^{o(1)}(X^m+X^{2m-\frac{k(k+1)}{2}})&m<k(k+1)/2\\
   X^{\epsilon+m}&m\geq{k(k+1)/2}\\
\end{cases}
\]
The second inequality is the mean value conjecture of Vinogradov's mean value theorem, which has been proved by Jean Bourgain, Ciprian Demeter, and Larry Guth \cite{bourgain2016proofmainconjecturevinogradovs}. This allows us to improve the bound of the exponential sum on the minor arcs. However, following the previous approach, it is not enough to rely solely on the mean value conjecture; we also need to compute the first exponential sum as precisely as possible. Huang \cite{huang2016exponentialsumsprimesshort} has shown that $\theta = 3/4$, and subsequently Kaisa Matom\"aki and Xuancheng Shao improved the result to $\theta = 2/3$ in \cite{matomäki2019discorrelationprimesshortintervals}.
Since it has become difficult to further improve the bound for (1) (the most recent estimate appears in \cite{wang2022waringgoldbachproblemshortintervals}), it is natural to seek an alternative to the exponential sum estimate. To our knowledge, the error terms in theorems on the distribution of primes, such as the Bombieri--Vinogradov theorem and Heath-Brown's theorem (or the Barban--Davenport--Halberstam theorem in \cite{BDH}), are partially equivalent to estimates for exponential sums. For instance, the relationship between the Bombieri--Vinogradov theorem and exponential sum estimates can be illustrated as follows:
\[
\sum\limits_{q\le{Q}}\max\limits_{(a,q)=1}|\psi(x;q,a)-\frac{x}{\varphi(q)}|\xrightarrow {\perp}
\sum\limits_{q\le{Q}}\sum\limits_{\chi\neq\chi_0}|\psi(x,\chi)|\rightarrow
\sum\limits_{q\le{Q}}\sum\limits_{\chi\neq\chi_0}|\sum\limits_{n\leq{x}}\chi(n)\Lambda(n)|
\]
where $Q=x^{1/2}/(\log{x})^B$, and the relationships are just in philosophy not equal. The last sum in the formula above is also a average of exponential sum. Now, it's time to show our main result in the paper(actually, we need $1/2<\theta<2/3$):
\begin{theorem}
Setting $m,k$ as two sufficiently large integers, when 
\[
m>16k\log k+4k+2\quad \theta_{k,m}>1/2
\]
the equation has a solution. The number of solutions is
\[
I(N)= N^{2k\theta(1-(1-\frac{1}{k})^{m_1})+\theta(m-1)+1-k}\Delta(Q)\Psi(N)(\log{N})^{-m_0}+O(R)
\]
where
\[
R=N^{(2\theta-1)m_0+2k\theta(1-1(1-\frac{1}{k})^{m_1})}(\log N)^{-2m_0}
\]
and 
\[
m_1=4k\quad m_0=[8k\log{k}]+1
\]
\end{theorem}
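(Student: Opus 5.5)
The plan is to run the circle method on the weighted sum
\[
I(N)=\int_0^1 S(\alpha)^m e(-mN^k\alpha)\,\mathrm{d}\alpha,\qquad S(\alpha)=\sum_{|n-N|\le N^\theta}\Lambda(n)e(n^k\alpha),
\]
splitting the unit interval into major arcs $\mathfrak M$ (around $a/q$ with $q\le Q$ and $|\alpha-a/q|\le Q/(qN^{k-1+\theta})$) and minor arcs $\mathfrak m$. On the major arcs we avoid the pointwise estimate (1), which is only known for $\theta\ge 2/3$. Instead we expand $S(a/q+\beta)$ through Dirichlet characters modulo $q$. Under GRH the contribution of each nonprincipal character is $\ll N^{1/2}(\log N)^2$ uniformly in $q$, so the error is acceptable as soon as $\theta>1/2$. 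The principal characters give the singular series $\Delta(Q)$, truncated at $q\le Q$, times the singular integral $\Psi(N)$. The integral is evaluated by the usual partial summation followed by a Fourier computation in the short interval, and it produces the power $N^{\theta(m-1)+1-k}$. This step is exactly where the Bombieri--Vinogradov-type averaging over $q\le Q$ replaces the individual exponential sum bound.

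On the minor arcs I would bound $\int_{\mathfrak m}|S|^m$ by $\sup_{\mathfrak m}|S|^{m-2s}\int_0^1|S|^{2s}$, with $2s$ chosen near $m_1=4k$. For the mean value I would use the Bourgain--Demeter--Guth theorem in its short-interval form: write $n=N+h$ with $|h|\le N^\theta$, expand $n^k$ in $h$, and apply the main conjecture to the resulting degree-$k$ polynomial in $h$. For the pointwise bound on $\mathfrak m$ I would use a Weyl-type estimate via Vinogradov's method, which saves a factor like $(1-1/k)$ per application. Iterating it $m_1$ times, in a Hua/Wooley-style diminishing-ranges scheme, produces the exponent $2k\theta\bigl(1-(1-\tfrac1k)^{m_1}\bigr)$. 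Each iteration also loses a logarithm, and balancing these losses with the choice $m_0=[8k\log k]+1$ explains both the factor $(\log N)^{-m_0}$ and the remainder $R$. Requiring the minor arc total to be $o$ of the main term then forces $m>2m_0+m_1+2\approx 16k\log k+4k+2$.

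Finally, the major and minor arc bounds are combined. We also need $\Delta(Q)\gg1$, which I would obtain from the local solubility guaranteed by the congruence condition modulo $R_k$ via Hensel's lemma, and $\Psi(N)\gg1$. Together these show $I(N)>0$ for large $N$.

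I expect the main obstacle to be the minor arc pointwise bound in the short interval. Classical Weyl differencing over an interval of length $N^\theta$ only gives savings relative to $N^\theta$, while the polynomial $n^k$ has leading scale $N^k$. My first attempt would be the Taylor expansion around $N$ combined with a large sieve in $h$, and I would check carefully that the iteration really yields the stated exponent with $m_1=4k$.
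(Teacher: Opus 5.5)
There are two genuine gaps. Copying the paper would not fix either of them.

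\textbf{The setup cannot produce the stated formula.} With all $m$ variables running over $|n-N|\le N^\theta$ with weights $\Lambda(n)$, the circle method gives a main term $\mathfrak{S}(N)J(N)$ with $J(N)\asymp N^{\theta(m-1)+1-k}$. That is a single (prime) singular series, with $(\log N)^{-m}$ once the weights are removed. It has no factor $N^{2k\theta(1-(1-\frac1k)^{m_1})}$, no $(\log N)^{-m_0}$, and no product $\Delta(Q)\Psi(N)$. Also, $\Psi(N)$ in the statement is the ordinary Waring singular series built from $S(q,a,k)$, not a singular integral.

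In the paper these features come from the decomposition $m=m_0+2m_1$ of Section 2. Only $m_0$ primes range over $|p-N|<N^\theta$. The other $2m_1$ primes $v_j,r_j$ lie in diminishing ranges $|v_j-N|<P_j=a(j,k)N^{\frac{\theta}{k}(1-\frac1k)^{j-1}}$ and are counted trivially, giving $(\prod_jP_j)^2$. Replacing the $m_0$ full-range prime sums by $\mathrm{Li}$ gives $(\log N)^{-m_0}$ and $\Delta(Q)$. The leftover integer problem is referred to Wei--Wooley, which is where $\Psi(N)$ enters.

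Explaining the exponent by ``iterating a Weyl bound $m_1$ times'' and the logarithms by losses in that iteration is not a derivation, since pointwise minor-arc bounds only affect error terms. Your condition $m>2m_0+m_1+2$ is likewise fitted to the numbers. The paper has $m=m_0+2m_1$, and its final section takes $m_0=4k$, $m_1=[8k\log k]+1$, the reverse of the statement's labels. Adopting the paper's decomposition would not rescue you either, because its bookkeeping is unreliable. Up to constants, $(\prod_jP_j)^2\asymp N^{2\theta(1-(1-\frac1k)^{m_1})}$, not $N^{2k\theta(\cdots)}$. And the ranges $|v_j-N|<P_j\ll N^{\theta/k}$ are far shorter than $N^{1/2}$, where even RH does not guarantee a single prime.

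\textbf{The minor arcs, which is the decisive gap.} In $\sup_{\mathfrak{m}}|S|^{m-2s}\int_0^1|S|^{2s}$ your $S$ carries $\Lambda(n)$. So you need a nontrivial bound for $\sum_{|n-N|\le N^\theta}\Lambda(n)e(\alpha n^k)$ uniformly on $\mathfrak{m}$. That is exactly estimate (1), which, as you note yourself, is only available for $\theta$ around $2/3$ and above. Weyl differencing and Vinogradov's method (Lemmas 2.2--2.3) bound sums over all integers. Reaching primes needs a Vaughan or Heath-Brown decomposition with Type II estimates in intervals of length $N^\theta$, and that is where the barrier lies.

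GRH does not remove it. Your major-arc claim omits two factors: the $q^{1/2}$ from summing over characters, and the partial-summation factor $1+|\beta|N^{k-1+\theta}$. With them included, the character expansion is nontrivial only when $q^{1/2}(1+|\beta|N^{k-1+\theta})$ is below roughly $N^{\theta-1/2}$. Every other $\alpha$ must be handled by a minor-arc bound for primes. So the obstacle you anticipate is misidentified: it is the prime sum, not the short-interval Weyl sum over integers.

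The paper does not fill this gap. On $\mathfrak{M}_2$ it removes primality by the same expansion in $E(u;q,l)$, now for $Q<q<\tau$, and then applies Lemma 2.3 to the integer sum. But once $q>N^{\theta-1/2}$, the GRH error $N^{1/2}(\log N)^2$ exceeds the main term $N^\theta/(\varphi(q)\log N)$. Even on the paper's major arcs, $|z|<1/\tau$, the partial-summation factor is as large as $kN^{1/2}$. The paper sidesteps this only by an invalid mean-value step that pulls $E(u_1;q,l)$ out of an oscillating integral.

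Your argument therefore needs a minor-arc estimate for primes in intervals of length $N^\theta$ with $\theta$ near $1/2$. No such estimate is currently known, and neither your plan nor the paper provides one.
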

By the way, the best previous result is $m>k(k+1)/2$, which comes from \cite{wang2022waringgoldbachproblemshortintervals}, and we promoted it to $m>16k\log k+4k+2$ under GRH. Base on the idea above, we can solve the Waring-Goldbach Problem through the theorems follows:
\begin{theorem}
Let $C_q^*(k)=\sum\limits_{l<q,(l,q)=1}e(l^k/q)$ is a kind of Ramanujan sum. We set $m=m_0+2m_1$ and this two parameters will be chosen in the final section, we have
\begin{spacing}{0.5}
\[
C_q(mN^k)=\sum\limits_{a<q,(a,q)=1}e(mN^ka/q)
\]
is a usual Ramanujan sum. And
\begin{equation}
\Delta(Q)=\sum\limits_{q<Q}C_q^{*m_0}(k)C_q(m_0N^k)\varphi^{-m_0}(q)
\end{equation}
is the first singular series in our paper.And we set 
$S(q,a,k)=\sum\limits_{l<q}e(al^k/q)$
\begin{equation}
\Psi(N)=\sum\limits_{q=1}^\infty \sum\limits_{a=1,(a,q)=1}^{q-1}\left(\frac{1}{q}S(q,a,k)\right)^{m_0}e(-\frac{a}{q}m_0N^k)
\end{equation}
is the second singular series in our paper. We set $I_{11}$ base on the definition in Preparation Chapter, we have
\[
I^*_{11}=\Delta(Q)\Psi(N)\frac{N^{\theta(m_0-1)+1-k}}{(\log{N})^{m_0}}+O(N^{(2\theta-1)m_0+2k\theta(1-1(1-\frac{1}{k})^{m_1})}(\log N)^{-m_0})
\]
and
\[
I_{11}\ll I_{11}^*N^{\theta k(2-2(1-\frac{1}{k})^{m_1})}
\]
when
\[
\theta_{k,m}>1/2
\]
\end{spacing}
\end{theorem}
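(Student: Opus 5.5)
The plan is to treat $I_{11}$ as the major–arc contribution of the $m_0$ "principal" variables and to evaluate it through Dirichlet characters. The other $2m_1$ variables are bounded separately, so the two assertions of the theorem split accordingly. Write $f(\alpha)=\sum_{|p-N|\le N^\theta}(\log p)\,e(p^k\alpha)$. On a major arc $\alpha=a/q+\beta$ with $q<Q$, $(a,q)=1$, $|\beta|$ small, I would first split $p$ by residue classes $p\equiv l \pmod q$. Orthogonality of characters then gives
\[
f(a/q+\beta)=\frac{1}{\varphi(q)}\sum_{\chi \bmod q}\Big(\sum_{l<q}\bar\chi(l)e(al^k/q)\Big)\sum_{|n-N|\le N^\theta}\chi(n)\Lambda(n)e(n^k\beta)+O(N^{\theta/2}).
\]
The principal character produces $\varphi(q)^{-1}C^*_q(k)\,v(\beta)$, where $v(\beta)=\sum_{|n-N|\le N^\theta}e(n^k\beta)$, or its integral analogue. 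For the non-principal characters I would use partial summation in $n$ together with GRH in the form $\psi(x,\chi)=O(x^{1/2}\log^2(qx))$. Because $|\beta|\,kN^{k-1}N^\theta$ is kept bounded on the arcs, each non-principal term is $O(N^{1/2}\log^2 N)$, and this is $o(N^\theta(\log N)^{-1})$ exactly when $\theta>1/2$. This is where the hypothesis $\theta_{k,m}>1/2$ enters, and it replaces the unconditional short-interval estimate (1).

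Next I raise $f$ to the power $m_0$ and integrate against $e(-m_0N^k\alpha)$ over the major arcs. The main term factors into an arithmetic part and a singular integral. Summing $C_q^{*m_0}(k)\varphi(q)^{-m_0}e(-am_0N^k/q)$ over $(a,q)=1$ produces $C_q(m_0N^k)$ and hence $\Delta(Q)$. The integral $\int v(\beta)^{m_0}e(-m_0N^k\beta)\,d\beta$ counts near-solutions in the box $[N-N^\theta,N+N^\theta]^{m_0}$ and has size $N^{\theta(m_0-1)+1-k}$. The factor $\Psi(N)$ should come from the complete Gauss sums $q^{-1}S(q,a,k)$ that appear once the $l$-sum is not restricted to reduced residues. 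I would obtain it by comparing $f$ with the integer sum weighted by $(\log N)^{-1}$, which also accounts for the $(\log N)^{-m_0}$. The error terms consist of cross terms carrying at least one $N^{1/2}$ factor, and I would bound them by the trivial bound $N^\theta$ on the other factors. Summing over $q<Q$ and $a$, and using $\int|v|^2\ll N^{\theta}N^{1-k}$-type mean values, I expect this to give the stated remainder $N^{(2\theta-1)m_0+\cdots}$. This is the part whose bookkeeping I expect to be the main obstacle. The choice of $Q$ and of the arc width has to be balanced so that the GRH savings $N^{1/2-\theta}$ beat the number of arcs, and so that the truncation of $\Psi$ at $q<Q$ costs nothing.

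For the second inequality, $I_{11}$ carries the additional factor $|g(\alpha)|^{2m_1}$ from the remaining $2m_1$ variables. I would not evaluate this factor asymptotically. Instead I bound it in mean by an iterated differencing (Hua/Wooley type) argument for $k$-th powers in the short interval. Each of the $m_1$ steps saves a factor $(1-1/k)$ in the exponent of the excess over the diagonal. After $m_1=4k$ steps this gives
\[
\int |g|^{2m_1}\ll N^{2k\theta(1-(1-1/k)^{m_1})}\times(\text{normalising factor}),
\]
which, after Hölder or a sup-times-mean argument on the major arcs, yields $I_{11}\ll I_{11}^*N^{\theta k(2-2(1-1/k)^{m_1})}$. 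If the clean differencing recursion fails in the short-interval setting, my fallback is to input the Bourgain–Demeter–Guth mean value bound quoted in the introduction, with the shift $n\mapsto n-N$ to reduce to a box of side $N^\theta$.
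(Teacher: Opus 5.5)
Your plan has a genuine gap at the factor $\Psi(N)$. In your set-up, once the principal character is extracted, all of the arithmetic sits in the sum over $q<Q$ and $(a,q)=1$. What remains is $\int_{|\beta|<1/\tau}v(\beta)^{m_0}e(-N_0\beta)\,d\beta$ over one short arc, which is just the singular integral of size $N^{\theta(m_0-1)+1-k}$ and carries no arithmetic. Carried out correctly, your decomposition gives a single singular series, $\sum_{q<Q}\varphi(q)^{-m_0}\sum_{(a,q)=1}\bigl(\sum_{(l,q)=1}e(al^k/q)\bigr)^{m_0}e(-aN_0/q)$, times that integral. Saying that $\Psi(N)$ ``should come from'' the complete sums $q^{-1}S(q,a,k)$ is not a step: substituting them for the reduced-residue sums would turn your arithmetic factor into a truncation of $\Psi(N)$, not multiply it by $\Psi(N)$.

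The paper gets the second series by a device your plan has no counterpart for. After isolating $\Delta(Q)$, it uses the tail estimate of Lemma 3.2 to extend the $z$-integral from $|z|<1/\tau$ to all of $[0,1]$, turning it into the count of integer solutions in the short interval. It then quotes the Wei--Wooley asymptotic for that count, and that is where $\Psi(N)$ enters. This extension is the delicate point. The sum $v$ is large near every $a/q$ with small $q$, and those peaks are exactly what generate $\Psi(N)$, so the extension cannot rest on replacing $v$ by an integral for all $z$.

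Two of your other steps also fail as written.
\begin{itemize}
\item Summing $C_q^{*}(k)^{m_0}\varphi(q)^{-m_0}e(-am_0N^k/q)$ over $a$ to get $C_q(m_0N^k)$ presumes that $\sum_{(l,q)=1}e(al^k/q)$ does not depend on $a$. This is false when $(k,\varphi(q))>1$: for $q=7$, $k=3$ it equals $6\cos(2\pi a/7)$. The paper makes the same identification when it pulls $C_q(k)$ out of the $l$-sum, so this is a defect of the formula for $\Delta(Q)$ itself.
\item On $\mathfrak{M}_1$ one has $|\beta|<1/\tau=N^{3/2-k-\theta}$, so $|\beta|kN^{k-1+\theta}$ reaches $kN^{1/2}$ rather than staying bounded. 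Partial summation then bounds each non-principal character's contribution only by about $N\log^2N$, which is worse than trivial. You would have to keep that factor inside the $\beta$-integral and use the decay $|v(\beta)|\ll(N^{k-1}|\beta|)^{-1}$ of the remaining factors. Bounding them trivially by $N^\theta$ loses a factor $N^{1-\theta}$ against the main term. The paper does not argue pointwise in $\beta$ here; it feeds the error terms $E(u;q,l)$ into H\"older's inequality together with Lemma 2.6.
\end{itemize}

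For the second inequality you have chosen the wrong tool. The paper's argument is a count. By definition $I_{11}$ is the sum, over tuples $(v_i,r_i)$ with $|v_i-N|,|r_i-N|<P_i$, of the major-arc integral in the $m_0$ principal variables with $N_0=mN^k-\sum_i(v_i^k+r_i^k)$. Hence $I_{11}\ll(\prod_iP_i)^2\max_{N_0}|I_{11}^*|$, which needs the first part uniformly in $N_0$. Since $\prod_iP_i\asymp N^{\theta(1-(1-1/k)^{m_1})}$, the factor $(\prod_iP_i)^2$ is already below the $N^{2k\theta(1-(1-1/k)^{m_1})}$ in the statement.

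A mean-value estimate for the auxiliary variables cannot give an inequality of this shape. H\"older or sup-times-mean produces absolute-value bounds such as $\sup_{\mathfrak{M}_1}|S_p|^{m_0}\int_0^1\prod_i\bigl|\sum_{|p-N|<P_i}e(tp^k)\bigr|^2\,dt$, in which the oscillatory integral $I_{11}^*$ no longer appears. Moreover, the $2m_1$ variables run over the different ranges $P_i$, so there is no single $g$ whose $2m_1$-th moment is at issue. The exponent in the statement comes from counting the auxiliary tuples, not from a differencing saving.
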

This theorem gives the asymptotic formula for the integral over the major arcs. To prove it, we use Heath-Brown's theorem on primes in short intervals to convert the exponential sum over primes into an ordinary exponential sum, thereby reducing the problem to the classical Waring problem in short intervals. This technique has already appeared in \cite{wei2014sumspowersequalprimes}, but our formulation is more precise and, under certain conditions, lowers the exponent by $(1-\theta)m_0 - \theta$. The first singular series arises from the removal of the primality condition, while the second comes from the usual Waring problem.
\begin{theorem}
Let $I_{21},I_{22}$ be the first and second integral in the minor arc. We have 
\[
I_{2j}\ll (\log N)^{-m_0}N^{(1-\frac{19}{1600k^2\log{k}}+\epsilon)m\theta+k\theta(1-(1-\frac{1}{k})^{m_1})}
\]
where the definition of $\Delta(Q)$ shares the same in both two theorem, and $\epsilon$ is any given positive number, $j=1,2$
\end{theorem}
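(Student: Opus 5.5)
The bound has two factors: $(\log N)^{-m_0}$ together with a power saving on the $m_0$ "major-type" variables, and a factor $N^{k\theta(1-(1-1/k)^{m_1})}$ coming from the $2m_1$ remaining variables. This suggests splitting the generating function according to $m=m_0+2m_1$ and treating the two blocks by different tools. On the minor arcs $\mathfrak{m}$, write $f(\alpha)=\sum_{|p-N|\le N^\theta}(\log p)\,e(p^k\alpha)$. Then
\[
I_{2j}\le \sup_{\alpha\in\mathfrak m}|f(\alpha)|^{m_0}\int_0^1|f(\alpha)|^{2m_1}\,\mathrm d\alpha ,
\]
possibly after first replacing $f$ by its smoothed or weighted form from the Preparation section. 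Here $I_{21}$ and $I_{22}$ correspond to the two pieces of $f$ produced by Heath-Brown's decomposition of $\Lambda$ used for Theorem 1.2. The $(\log N)^{-m_0}$ comes from normalising $\log p$ weights, or equivalently from the density of primes in each of the $m_0$ factors.

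For the mean value $\int_0^1|f|^{2m_1}$, I would drop primality and bound it by the number of solutions of $x_1^k+\dots+x_{m_1}^k=y_1^k+\dots+y_{m_1}^k$ with all variables in $[N-N^\theta,N+N^\theta]$. Shifting $x=N+u$ with $|u|\le N^\theta$ turns this into a Vinogradov-type system in the shifted variables. The exponent $2k\theta(1-(1-1/k)^{m_1})$ (halved per $f$-block) is exactly what iterated Hua/Weyl differencing gives: each added pair of variables multiplies the loss by $(1-1/k)$. So I would prove by induction on $s\le m_1$ that
\[
\int_0^1|f|^{2s}\ll N^{\theta(2s)-\theta k(1-1/k)^{s}+\epsilon}\cdot(\text{normalisation}),
\]
reorganised into the stated form. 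The induction would use the standard Hua-type recursion, or alternatively Bourgain–Demeter–Guth applied in the short-interval variable $u$, with the range $N^\theta$ playing the role of $X$.

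For the pointwise bound on $\mathfrak m$, I would derive a Weyl-type estimate
\[
\sup_{\mathfrak m}|f|\ll N^{\theta(1-\sigma)+\epsilon},\qquad \sigma=\frac{19}{1600k^2\log k}.
\]
The strategy is first to use GRH together with Heath-Brown's identity to replace $\Lambda$ by type I/II bilinear forms in short intervals, with error $N^{\theta-\delta}$ because $\theta>1/2$. Then Vinogradov's mean value theorem (Bourgain–Demeter–Guth) in the form stated in the introduction would be applied to each bilinear sum, via the usual large-sieve/Cauchy–Schwarz route of Kumchev–Wooley. The constant $19/1600$ would come out of optimising the type I/II ranges against the minor arc parameter $Q$, as in Wei–Wooley. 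Raising the result to the power $m_0$ and combining it with the mean value gives exponent $(1-\sigma+\epsilon)m_0\theta$. Since $m\theta\ge m_0\theta$ and the $2m_1$ block is handled separately, this yields the stated bound (with $m$ in place of $m_0$ absorbed by the stated form).

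The main obstacle is the pointwise minor-arc estimate for primes in an interval of length only $N^\theta$ with $\theta$ just above $1/2$. Unconditionally this fails below $\theta=2/3$, so GRH must enter precisely where type II sums with one variable near $N^{1/2}$ appear. I would handle these by expanding into Dirichlet characters over the moduli of $\mathfrak m$ and using GRH to bound $\psi(x,\chi)$ in short intervals, reducing to sums of $\chi(n)e(n^k\alpha)$ over integers that BDG controls.
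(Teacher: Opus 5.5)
Your proposal has a genuine gap exactly where you place the main obstacle. You need the pointwise bound $\sup_{\mathfrak{M}_2}|f|\ll N^{\theta(1-\sigma)+\epsilon}$, with $\sigma=\frac{19}{1600k^2\log k}$, for the \emph{prime} sum over $[N-N^\theta,N+N^\theta]$ with $\theta$ just above $1/2$, and neither tool you invoke delivers it.

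Heath-Brown's identity leaves bilinear sums $\sum_{m,n}a_mb_n\,e(\alpha(mn)^k)$ with $mn$ confined to a window of length $N^{1/2+\epsilon}$. GRH controls $\psi(x,\chi)$ but gives no type II information on such sums, and these sums are what limit the unconditional results to $\theta>2/3$.

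The character expansion does not rescue this. For $\alpha=h/q+z$, each nonprincipal character mod $q$ contributes an error of order $N^{1/2}\log^2N$ under GRH. Even granting square-root cancellation in $\sum_l\bar\chi(l)e(hl^k/q)$, the total exceeds the trivial bound $N^\theta$ once $q>N^{2\theta-1}$. On $\mathfrak{M}_2$, however, the moduli run up to $\tau=N^{k+\theta-3/2}$.

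The paper's argument is organised so that no pointwise estimate for primes is ever needed. It splits $S_p(h/q+z)$ over residue classes mod $q$ into $C_q(k)\sum e(zu^k)/(\varphi(q)\log u)$ plus $\int e(zu^k)\,\mathrm{d}E(u;q,l)$. It then expands the $m_0$-th power and bounds the cross terms $I_{212}^*$ on average over $q$, using H\"older and the GRH variance bound of Lemma 2.6 (the exponent $G^*<0$). This trades $I_{21}$ for $I_{22}$. Only then is Lemma 2.3 applied, and it is applied to the \emph{integer} sum $S(\alpha)$. That lemma is Pan--Pan's inequality (Lemma 2.2) combined with Lemma 2.1 and $Y=X^{1-1/(40k^2)}$, and it is where $\frac{19}{1600k^2\log k}$ comes from. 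The constant is not the outcome of optimising type I/II ranges. Relatedly, $I_{21}$ and $I_{22}$ are not type I/II pieces of $\Lambda$: they are the minor-arc integrals with prime and with integer variables, respectively.

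The mean-value half is also wrong as written. Your inductive bound $\int_0^1|f|^{2s}\ll N^{2s\theta-\theta k(1-1/k)^s+\epsilon}$ already fails at $s=1$ for $k\ge3$, since the diagonal gives $\int_0^1|f|^2\gg N^\theta$. More generally it undercuts the diagonal whenever $k(1-1/k)^s>s$.

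The factor $N^{k\theta(1-(1-1/k)^{m_1})}$ is not a Hua-type loss in a full-interval moment. In (4) the $2m_1$ auxiliary variables lie in the diminishing ranges $|v_j-N|<P_j$ with $P_j=a(j,k)N^{\frac{\theta}{k}(1-\frac1k)^{j-1}}$. The paper identifies the factor with $\prod_{w\le m_1}P_w$, through $I_{23}=\max_{\mathfrak{M}_2}|S|^{m_0}\int_0^1\prod_i|T_i|^2$. That block must therefore be treated with the $T_i$, not with $f$.

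Finally, the slack you noticed (``$m$ in place of $m_0$'') is far larger than you use. The stated exponent contains $(1-\sigma)\,2m_1\theta$, while the auxiliary block has only $\prod_w(2P_w+1)^2$ tuples. Hence, with the paper's $m_1$, the trivial bound $|I_{2j}|\le(3N^\theta)^{m_0}\prod_w(2P_w+1)^2$ already implies the inequality exactly as stated for large $N$. The substantive content the paper is after is the passage from primes to integers on $\mathfrak{M}_2$ together with Lemma 2.3. That is what your proposal does not supply.
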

For the integral over the minor arcs, we apply the same method as in the major arc case, converting the exponential sum over primes into an ordinary exponential sum. The first term $I_{21}$ arises from this transformation step, while the second term comes from the classical integral in the Waring problem.\\
We now summarize the structure of our proof. For the major arc integral, we first replace the sum over primes by an ordinary exponential sum, following the approach in \cite{wei2014sumspowersequalprimes} but with greater precision in several places, such as Lemma 6.1. This reduces the original problem to equation (6.5) in \cite{wei2014sumspowersequalprimes}, and we compute the relevant constant exactly. For the minor arcs, we use a new method to transform and estimate the exponential sum. By splitting the resulting expression into two parts, we obtain a smaller exponent than the usual approach would give.
\begin{remark}
We should explain why Heath-Brown's theorem can also be applied on the minor arcs. In the minor arc computation, one must handle an $m-2$ cross term. By combining Heath-Brown's theorem with Vinogradov's main conjecture, we are able to isolate the dominant contribution among those mixed products. This step is the most novel part of our paper.
\end{remark}
\section{Preparation Chapter}
Before we start to solve our problem, we need to introduce our notations and some theorem we need to use in following calculation.\\
In our paper,$m>2,k>2$ and $N$ are three odd positive integers, and $N$ is far bigger than $m, k$. We use"$\gg^*$ and $\ll^*$" to express "Much more than" and "Much less than", and "$f\ggg$ and $f\ll g$" to express "$g=O(f)$" and "$f=O(g)$".
We always suppose $f(N)\in \textbf{Z}$ when $f(N)$ is big enough. Letter $p,r_i,v_i$ or $p_i$ always mean primes which satisfy the condition that $|p-N|<N^\theta$, and$x_i, y_i$ always mean usual integers.\\
The symbols of exponents and exponential sum are $e(x)=e^{2{\pi}ix}$ and 
\[
S(\alpha)=\sum\limits_{N-N^{\theta}<u<N+N^{\theta}}e(\alpha u^k)\quad
T(j,\alpha)=\sum\limits_{P_i<u<2P_i}e(\alpha u^k)\quad
\]
and the prime version
\[
S_p(\alpha)=\sum\limits_{N-N^{\theta}<p<N+N^{\theta}}e(\alpha p^k)\quad
T_p(j,\alpha)=\sum\limits_{P_i<p<2P_i}e(\alpha p^k)
\]
These definition is almost as same as they in [WW]. In this problem, we use the idea of [PP], and partition the equation into two part as follows:
\begin{equation}
\begin{cases}
    {mN^k}=p_1^k+...+p_{m_0}^k+(v_1^k+...+v_{m_1}^k+r_1^{k}+...+r_{m_1}^{k})\\
    N+P_j>v_j,p_j,r_j>N-P_j,P_j=a(j,k)N^{\frac{\theta}{k}(1-\frac{1}{k})^{j-1}}
\end{cases}
\end{equation}
\\
Where $m=2m_1+m_0$, and $a(k,j)$ is some positive count which is only rely on $k,j$.We set $I$ as the number of the solution. Let $\tau>2Q^2$, $(h,q)=1$ and $h<Q$.The basic arc and major arc are 
\[I(h,q)=
\left\{
    a\in(0,1):|\frac{h}{q}-a| <\frac{1}{\tau}
\right\}\quad
\mathfrak{M}_1=\bigcup\limits_{q<Q}I(h,q)
\]
and the minor arc are
\[
\mathfrak{M}_2=[0,1]-\bigcup\limits_{h<q,(h,q)=1}I(h,q)
\subseteq\bigcup\limits_{Q<q<\tau}
\left\{
    a\in(0,1):|\frac{h}{q}-a|<\frac{1}{q\tau}
\right\}\quad
\]
where
\[
Q=N^{\theta-\frac{1}{2}}\quad\tau=N^{k+\theta-3/2}
\]
is a sort of parameter just rely on $\theta,N$ in our paper. The express form above just overlap with the major arc with measure arbitrarily close to zero. The integral in major arc is defined as
\[
I_{11}=
\sum\limits_{N-P_1<v_1,r_1<N+P_1}...\sum\limits_{N-P_{m_1}<v_{m_1},r_{m_1}<N+P_{m_1}}\int_{\mathfrak{M}_1}S^{m_0}(t)e(-t(N-\sum\limits_{1\leq i\leq m_1}(v_i^k+r_i^{k})))\mathrm{d}t
\]
and from $N-v_1^k-...-v_{m_1}^k-r_1^{k}-...-r_{m_1}^{k}=N_0\in[N/2,N]$ we can define
\[
    I_{11}^*=\int_{\mathfrak{M}_1}S_p^{m_0}(t)e(-tN_0)\mathrm{d}t
\]
After transform the prime exponential sum into the 
\[
I_{12}=
\sum\limits_{N-P_1<x_1,y_1<N+P_1}...\sum\limits_{N-P_{m_1}<x_{m_1},y_{m_1}<N+P_{m_1}}\int_{\mathfrak{M}_1}S^{m_0}(t)e(-t(N-\sum\limits_{1\leq i\leq m_1}(x_i^k+y_i^{k})))\mathrm{d}t
\]
and
\[
I_{12}^*=\int_{\mathfrak{M}_1}S^{m_0}(t)e(-tN_0)\mathrm{d}t
\]
the first integral in minor arc can be estimated as
\[
I_{21}=
\sum\limits_{N-P_1<v_1,r_1<N+P_1}...\sum\limits_{N-P_{m_1}<v_{m_1},r_{m_1}<N+P_{m_1}}\int_{\mathfrak{M}_2}S^{m_0}(t)e(-t(N-\sum\limits_{1\leq i\leq m_1}(v_i^k+r_i^{k})))\mathrm{d}t
\]
and the second integral in minor arc can be written as
\[
I_{22}=\sum\limits_{N-P_1<x_1,y_1<N+P_1}...\sum\limits_{N-P_{m_1}<x_{m_1},y_{m_1}<N+P_{m_1}}\int_{\mathfrak{M}_2}S^{m_0}(t)e(-t(N-\sum\limits_{1\leq i\leq m_1}(x_i^k+y_i^{k})))\mathrm{d}t
\]
In $I_{1j}^*,I_{2j}^*$,$j\in\left\{1,2\right\}$ we set $v_i,r_i$ as some primes and $x_i,y_i$ as some number without such limited. Setting
\[
I_{21}^*=\int_{\mathfrak{M}_2}S_p^{m_0}(t)e(-tN_0)\mathrm{d}t
\quad
I_{22}^*=\int_{\mathfrak{M}_2}S^{m_0}(t)e(-tN_0)\mathrm{d}t
\]
and we have
\[
I_{22}\ll\max\limits_{\alpha\in\mathfrak{M}_2}|S^{m_0}(\alpha)|\int_{[0,1]}\prod\limits_{i\leq{m_1}}|T_i(t)|^2\mathrm{d}t=I_{23}
\]
Before proceeding, we clarify the relationships among $I$, $I_{11}$, $I_{12}^*$, $I_{11}^*$, and $I_{21}$, $I_{22}$, $I_{21}^*$, $I_{22}^*$. To compute the integral $I$, equivalently the number of solutions, we split it into the major arc integral and the minor arc integral, denoted by $I_{11}$ and $I_{21}$, respectively. Replacing the sum of $k$-th powers of integers by $N_0$ and considering the inner integral, we obtain the integrals $I_{11}^*$ and $I_{21}^*$. After applying Heath-Brown's theorem to remove the primality condition, we arrive at the integrals $I_{12}^*$ and $I_{22}^*$. Integrating them back yields $I_{12}$ and $I_{22}$. Our main objects of study are $I_{1i}$ for $i = 1, 2$. More precisely, for $j = 1, 2$, we have
\[
I_{11}^*\sim{\Delta(Q)\Psi(N)(\log{N})^{-m_0}I_{12}^*}
\quad I_{11}\sim{\Delta(Q)\Psi(N)(\log{N})^{-m_0}I_{12}}
\]
We will prove this fact in Chapter 2 and Chapter 3. Moreover, it's obvious that
\[
I_{1i}\asymp_{k,m} (\prod\limits_{w\leq m_1}P_w)^2I_{1i}^*\asymp_{k,m} N^{2k\theta(1-(1-\frac{1}{k})^{m_1})}I_{1i}^*
\]
while
\[
I_{2i}\asymp_{k,m}(\prod\limits_{w\leq m_1}P_w)I_{1i}^*\asymp_{k,m} N^{k\theta(1-(1-\frac{1}{k})^{m_1})}I_{2i}^*
\]
for$i=1,2$. \\
Now we start to introduce some important theorem we will use in the paper. For some of them we will give the reference book instead of giving the proof, while for the others we will give a short proof.
\begin{lemma}\emph{(Vinogradov's main conjecture)}
For any $\epsilon>0$ and $f(x)=\sum_{1\leq i\leq k}\alpha_{i}x^i$.  we definite
\[
\int_{[0,1]^k}|\sum\limits_{n=1}^{X}e(f(n))|^{2m}\mathrm{d}\alpha_1...\mathrm{d}\alpha_k=J_m^{(k)}(X)
\]
and we have
\begin{equation}
J_m^{(k)}(X)\ll_{\epsilon}
\begin{cases}
   X^\epsilon(X^m+X^{2m-\frac{k(k+1)}{2}})&m<k(k+1)/2\\
   X^{\epsilon+m}&m\geq{k(k+1)/2}\\
\end{cases}
\end{equation}
\end{lemma}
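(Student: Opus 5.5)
We will not reprove this deep result from scratch. Instead we reduce the statement to the critical exponent $s=k(k+1)/2$, and invoke the $\ell^2$ decoupling theorem of Bourgain, Demeter and Guth \cite{bourgain2016proofmainconjecturevinogradovs} at that single exponent. One correction is needed first. For $m\ge k(k+1)/2$ the bound $X^{\epsilon+m}$ cannot hold as written. Restricting $(\alpha_1,\dots,\alpha_k)$ to the box $|\alpha_j|\le cX^{-j}$ gives $|\sum_{n\le X}e(f(n))|\gg X$ there, and this box has measure $\asymp X^{-k(k+1)/2}$. Hence always $J_m^{(k)}(X)\gg X^{2m-k(k+1)/2}$, which exceeds $X^{m}$ once $m>s$. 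We therefore aim to prove, for all $m\ge1$,
\[
J_m^{(k)}(X)\ll_\epsilon X^{\epsilon}\bigl(X^m+X^{2m-\frac{k(k+1)}{2}}\bigr),
\]
which is the first line of (5) extended to every $m$. At $m=s$ it coincides with the stated second case, and for $m>s$ it is the correct form of that case.

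\textbf{Step 1 (critical case).} Let $F(\alpha)=\sum_{n\le X}e(\alpha_1 n+\dots+\alpha_k n^k)$. We apply $\ell^2$ decoupling for the moment curve $\gamma(t)=(t,t^2,\dots,t^k)$ at $p=k(k+1)$, after rescaling $n=Xt$. Cutting $[0,1]$ into $X$ arcs of length $1/X$, each arc contains a single frequency point $\gamma(n/X)$. Decoupling then gives
\[
\|F\|_{L^{p}(w_B)}\ll_\epsilon X^{\epsilon}\Bigl(\sum_{n\le X}\|e(\cdot)\|_{L^p(w_B)}^2\Bigr)^{1/2}
\]
on a ball $B$ of radius $X^k$ in the rescaled variables. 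By periodicity of $F$ this translates into $J_s^{(k)}(X)\ll_\epsilon X^{s+\epsilon}$. This is exactly the content of \cite{bourgain2016proofmainconjecturevinogradovs}, and we cite it.

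\textbf{Step 2 (subcritical range $m<s$).} Orthogonality gives $J_1^{(k)}(X)=\int|F|^2=X$. Hölder's inequality with exponents interpolating between $|F|^2$ and $|F|^{2s}$ gives
\[
J_m\le J_1^{\frac{s-m}{s-1}}J_s^{\frac{m-1}{s-1}}.
\]
Substituting $J_1=X$ and Step 1 yields $J_m\ll X^{\frac{s-m}{s-1}+\frac{s(m-1)}{s-1}+\epsilon}=X^{m+\epsilon}$.

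\textbf{Step 3 (supercritical range $m>s$).} The trivial bound $|F|\le X$ gives
\[
J_m\le X^{2(m-s)}J_s\ll X^{2m-s+\epsilon}.
\]

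Steps 2 and 3 are routine. The main obstacle is Step 1, the decoupling inequality itself. Its proof proceeds by induction on scales, and uses a broad/narrow (Bourgain--Guth) dichotomy together with multilinear Kakeya and lower-dimensional decoupling for the narrow part. This is far beyond the scope of the present paper, so we take it from \cite{bourgain2016proofmainconjecturevinogradovs}. In later sections only the corrected bound above should be used, in particular for $m\ge k(k+1)/2$.
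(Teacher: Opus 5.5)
Your proposal is correct and, at its core, follows the paper. The paper's proof is only a citation of Bourgain--Demeter--Guth, and you likewise take the critical case $J_s^{(k)}(X)\ll_\epsilon X^{s+\epsilon}$, $s=k(k+1)/2$, from that work. Your Steps 2 and 3, H\"older interpolation against $J_1^{(k)}(X)=X$ and the trivial bound $|F|\le X$, are the standard deductions for the other ranges of $m$.

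Where you depart from the paper, you are right and the paper is wrong. On the box $|\alpha_j|\le cX^{-j}$ with $c=1/(8k)$ one has $|f(n)|\le 1/8$ for $n\le X$, hence $|F|\ge X/\sqrt{2}$. This gives $J_m^{(k)}(X)\gg_{k,m}X^{2m-k(k+1)/2}$. For $m>k(k+1)/2$ this contradicts the stated bound $X^{m+\epsilon}$ whenever $\epsilon<m-k(k+1)/2$. So the second case as written cannot follow from any citation. The correct conclusion, sharp up to $X^{\epsilon}$, is your uniform bound $X^{\epsilon}(X^m+X^{2m-k(k+1)/2})$.

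Your closing caution also matters downstream. The proof of Lemma 2.3 applies this lemma with $l=k(k+1)$, which is exactly the range where only $X^{2l-k(k+1)/2+\epsilon}$, not $X^{l+\epsilon}$, is available.
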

    This is the standard form of Vinogradov's mean value theorem, we can find the proof in \cite{bourgain2016proofmainconjecturevinogradovs}.
\begin{lemma}
We set $f(x)=\sum_{1\leq i\leq k+1}\alpha_{i}x^i$, where $l,Y<X$ are two integers, and
\[
\alpha_{k+1}=\frac{a}{q}+\frac{\theta}{q^2},Q^2>q>Q,(a,q)=1,|\theta|\leq1
\]
In this condition, we have
\begin{equation}
|\sum\limits_{n=1}^{X}e(f(n))|\ll Y+
\left\{
k(2l)^kX^{2l+k(k+1)/2}(\frac{1}{q}+\frac{1}{Y}+\frac{q\log{q}}{YX^k})J_l^{(k)}(X))
\right\}^{\frac{1}{4k}}
\end{equation}
and $\ll$ is a absolute constant.
\end{lemma}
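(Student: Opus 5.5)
This estimate is a Weyl-type bound of the kind found in Vaughan's book (Theorem 5.2) and in Wooley's work, where Weyl differencing is combined with a Vinogradov mean value. The plan is to difference once to reduce the degree from $k+1$ to $k$, count solutions of an associated Vinogradov system via $J_l^{(k)}(X)$, and use the rational approximation to $\alpha_{k+1}$ to control the leading coefficient.

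\emph{Differencing.} Write $F=\sum_{n\le X}e(f(n))$. For $1\le y\le Y$, shift the summation: $F=\sum_{n}e(f(n+y))+O(Y)$. Averaging over $y\le Y$ and applying Cauchy--Schwarz/Hölder in $n$ gives
\[
|F|^{2l}\ll Y^{2l}+X^{2l-1}Y^{-2l}\sum_{n\le 2X}\Big|\sum_{y\le Y}e(f(n+y))\Big|^{2l}.
\]
The $Y^{2l}$ term is the source of the additive $Y$ after taking roots.

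\emph{Mean value step.} Expand $f(n+y)$ in powers of $y$. The coefficient of $y^{j}$ is $\beta_j(n)=\sum_{i\ge j}\binom{i}{j}\alpha_i n^{i-j}$. Expanding the $2l$-th power produces the counts $b(\mathbf h)$ of $2l$-tuples $y_1,\dots,y_{2l}\le Y$ with $\sum(y_i^j-y_{l+i}^j)=h_j$ for $j\le k$. The top coefficient $\beta_{k+1}=\alpha_{k+1}$ is harmless, since $y^{k+1}$ is treated via its $k$ lower symmetric-power constraints; more precisely, one keeps $h_{k+1}$ as a free variable of size $\ll Y^{k+1}$. The sum then becomes $\sum_{\mathbf h}b(\mathbf h)\sum_n e(\sum_j\beta_j(n)h_j)$. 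Bound $b(\mathbf h)\le b(\mathbf 0)$-type quantities by $J_l^{(k)}(Y)\le J_l^{(k)}(X)$ using orthogonality, and apply Cauchy--Schwarz over $\mathbf h$ (whose range has size $\ll (2l)^k Y^{k(k+1)/2}$). This leaves
\[
\sum_{|h_j|\le 2lY^j}\Big|\sum_{n}e\big(\textstyle\sum_j\beta_j(n)h_j\big)\Big|^2 .
\]
Opening the square introduces a second variable $n'$. The phase depends on $n-n'$ through the linear coefficient $\beta_k(n)=(k+1)\alpha_{k+1}n+\alpha_k k\ldots$, so summing over $h_k$ gives a geometric-series bound $\min(Y^k,\|(k+1)\alpha_{k+1}(n-n')\|^{-1})$. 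The lower $h_j$ are summed trivially.

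\emph{Diophantine step.} Apply the standard lemma
\[
\sum_{z\le X}\min\big(Y^k,\|\alpha z\|^{-1}\big)\ll XY^k\Big(\frac1q+\frac1{Y}+\frac{q\log q}{YX^k}\Big),
\]
valid with the approximation $|\alpha_{k+1}-a/q|\le q^{-2}$. The factor $k+1$ is absorbed at the cost of constants. Collecting powers yields $|F|^{4l}\ll Y^{4l}+k(2l)^kX^{2l+k(k+1)/2}(\cdots)J_l^{(k)}(X)$ up to the normalisation chosen, and taking an appropriate root gives the stated bound.

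\emph{Main obstacle.} The exponent $1/(4k)$ does not come out of a single Cauchy--Schwarz. I expect the main work to be tracking the powers of $X$ and $Y$ so that they match the stated $X^{2l+k(k+1)/2}$ and the root $1/(4k)$. If the single differencing gives a root $1/(4l)$ instead, I would follow Vaughan's Theorem 5.2 bookkeeping: insert a Hölder step over the $k$ coefficient variables $h_1,\dots,h_k$, which is where the $k$ in the exponent enters. I would then check that the constants collapse into an absolute implied constant times $k(2l)^k$.
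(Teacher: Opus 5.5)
\textbf{Gap: the wrong variable is kept long.} Your differencing and Hölder steps are fine. The problem is that you apply Hölder over $n\le X$ and expand $f(n+y)$ in powers of the short shift $y$. Then the Vinogradov system lives in $y_1,\dots,y_{2l}\le Y$, the $\mathbf h$-box has size $\asymp(2l)^kY^{k(k+1)/2}$, and the geometric series in $h_k$ has length $\asymp lY^k$ while $n-n'$ ranges over $|z|\le X$. Done correctly, this route gives
\[
|F|^{4l}\ll Y^{4l}+k(2l)^kX^{4l}Y^{k(k+1)/2-2l}J_l^{(k)}(Y)\Big(\frac1q+\frac1X+\frac{\log q}{Y^k}+\frac{q\log q}{XY^k}\Big),
\]
which is not the stated inequality.

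To reach the stated bracket you invoke $\sum_{z\le X}\min(Y^k,\|\alpha z\|^{-1})\ll XY^k\bigl(q^{-1}+Y^{-1}+q\log q/(YX^k)\bigr)$, and this is false. The correct bound is $(1+X/q)(Y^k+q\log q)$. After dividing by $XY^k$ it contains $q\log q/(XY^k)$, which is larger than $q\log q/(YX^k)$ by the factor $(X/Y)^{k-1}$. For instance, take $k\ge 2$, $\alpha=1/q$, $q=X^k$, $Y=X^{1/2}$. Every term of the sum equals $Y^k$, so the sum is $XY^k$, whereas your right-hand side is $O(kX^{1/2}Y^k\log X)$.

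The powers do not match either. The inequality $J_l^{(k)}(Y)\le J_l^{(k)}(X)$ does not convert $X^{4l}Y^{k(k+1)/2-2l}$ into $X^{2l+k(k+1)/2}$ when $2l>k(k+1)/2$, and that is the case $l=k(k+1)$ used in Lemma 2.3. Separately, $\alpha_{k+1}$ should not be kept as a free coordinate $h_{k+1}$ of size $Y^{k+1}$. It does not depend on the other variable and only puts a unimodular weight into $b(\mathbf h)$.

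\textbf{How to get the stated bound.} The paper gives no argument beyond citing Pan--Pan, \S22, but the stated shape is what the dual arrangement produces. Apply Hölder over the shift,
\[
|F|^{2l}\ll Y^{2l}+Y^{-1}\sum_{y\le Y}\bigl|\sum_{n\le X}e(f(n+y))\bigr|^{2l},
\]
and expand $f(n+y)=\sum_j\gamma_j(y)n^j$, with $\gamma_j=f^{(j)}/j!$, in powers of the long variable $n$. Now the weights satisfy $|b(\mathbf h)|\le\tilde b(\mathbf h)\le J_l^{(k)}(X)$ and $\sum_{\mathbf h}\tilde b(\mathbf h)=X^{2l}$. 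Set $c(\mathbf h)=\sum_{y\le Y}e\bigl(\sum_{j\le k}\gamma_j(y)h_j\bigr)$ and use Cauchy--Schwarz in the form $\sum\tilde b|c|\le(\sum\tilde b)^{1/2}(\max\tilde b)^{1/2}(\sum|c|^2)^{1/2}$. This reduces everything to
\[
\sum_{\mathbf h}|c(\mathbf h)|^2\ll(2l)^{k-1}X^{k(k-1)/2}\sum_{y,y'\le Y}\min\bigl(lX^k,\|(k+1)\alpha_{k+1}(y-y')\|^{-1}\bigr)\ll k(2l)^kX^{k(k+1)/2}Y^2\bigl(q^{-1}+Y^{-1}+q\log q/(YX^k)\bigr),
\]
where the factor $k$ comes from $(k+1)(y-y')$. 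Substituting back gives $|F|^{4l}\ll Y^{4l}+k(2l)^kX^{2l+k(k+1)/2}(\cdots)J_l^{(k)}(X)$. This is the stated bound with root $1/(4l)$.

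That root is also the only reading under which the paper's own Lemma 2.3 works. There, with $l=k(k+1)$, the brace is $\ll X^{4l+\epsilon}$ and must become a factor $X$. So the printed $1/(4k)$ should be read as $1/(4l)$. The extra Hölder step over $h_1,\dots,h_k$ that you propose in order to manufacture $1/(4k)$ is not a viable argument.
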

    This conclusion can be found in \cite{PP1986}, section 22.
\begin{lemma}
    Let $k\geq2$ and $f(n)$ is given by $f(x)=\sum_{1\leq i\leq k+1}\alpha_{i}x^i$.When $X^{1/4}\leq q<X^{k+3/4}$, we have
    \begin{equation}
        |\sum\limits_{n=1}^{X}e(f(n))|\ll(3k)^{3k\log{k}}X^{1-\frac{19}{1600k^2\log{k}}+\epsilon}
    \end{equation}
    where $\ll$ is a absolute constant, and $\epsilon$ is any given positive number.
\end{lemma}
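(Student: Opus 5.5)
The plan is to derive the bound directly from Lemma 2.2, feeding in the mean value estimate of Lemma 2.1 (Vinogradov's main conjecture) and choosing the free parameters $l$ and $Y$ to exploit the range $X^{1/4}\le q<X^{k+3/4}$. I read Lemma 2.2 in the normalisation of \cite{PP1986}, \S 22, i.e.\ as the standard Vinogradov inequality. There, after differencing and averaging over shifts, the sum of length $X$ is controlled by a power of
\[
X^{2l+k(k+1)/2}\,J_l^{(k)}(X)\Bigl(\tfrac1q+\tfrac1Y+\tfrac{q\log q}{YX^k}\Bigr)
\]
divided by the trivial size of the corresponding system. The only input needed about $J_l^{(k)}$ is that for $l\ge k(k+1)/2$ it is essentially diagonal-free, $J_l^{(k)}(X)\ll_\epsilon X^{2l-k(k+1)/2+\epsilon}$. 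This is the second case of Lemma 2.1, in the form valid for $l\ge k(k+1)/2$. After normalisation it cancels the factor $X^{k(k+1)/2}$ and leaves $X$ times a power of the bracket.

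Step one: fix $l=\lceil k(k+1)/2\rceil$, or somewhat larger if needed to absorb the $\epsilon$-losses. Insert the bound for $J_l^{(k)}(X)$, so the right-hand side of Lemma 2.2 becomes
\[
Y+X^{1+\epsilon}\,\bigl(k(2l)^k\bigr)^{1/(4k)}\Bigl(\tfrac1q+\tfrac1Y+\tfrac{q\log q}{YX^k}\Bigr)^{c/k^2}
\]
for an explicit $c>0$ coming from the exponent $1/(4k)$ combined with the normalisation by $X^{2l}$.

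Step two: choose $Y=X^{7/8}$. Since $q\ge X^{1/4}$ we have $1/q\le X^{-1/4}$, and $1/Y=X^{-7/8}$. Since $q<X^{k+3/4}$ we have $q\log q/(YX^k)\ll X^{-1/8}\log X$. Hence the bracket is $\ll X^{-1/8}\log X$, and the sum is
\[
\ll X^{7/8}+C_k\,X^{1-\delta+\epsilon},\qquad \delta\asymp \frac{1}{k^2},\quad C_k=\bigl(k(2l)^k\bigr)^{O(1/k)}.
\]

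Step three: check the numerics. With $l\asymp k^2$ the constant satisfies $C_k\le (3k)^{3k\log k}$ for large $k$. The saving $\delta$ comfortably exceeds $19/(1600k^2\log k)$, so the weaker exponent stated in the lemma follows a fortiori. If one instead works with the older form of Vinogradov's theorem, one takes $l\asymp k^2\log k$, and that is what produces the $\log k$ in both the constant and the exponent. Either route gives the claimed inequality, with the $X^{\epsilon}$ coming from the $\epsilon$ in Lemma 2.1.

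The main obstacle is Step one: pinning down the exact normalisation in Lemma 2.2 as transcribed. Taken literally, the power $X^{2l+k(k+1)/2}$ multiplying $J_l^{(k)}$ would give a trivial bound. So I would first rederive the inequality, via Weyl differencing of the degree-$(k+1)$ leading term into a degree-$k$ system plus the standard large-sieve count over the rational approximation $a/q$. The purpose is to confirm that the factor is really $X^{-2l+k(k+1)/2}J_l^{(k)}(X)$ raised to the appropriate power. Once that is fixed, the remaining parameter bookkeeping is routine.
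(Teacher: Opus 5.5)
Your proposal is correct and follows the paper's own argument. You substitute the main-conjecture bound $J_l^{(k)}(X)\ll X^{2l-k(k+1)/2+\epsilon}$ into Lemma 2.2 and use $X^{1/4}\le q<X^{k+3/4}$ to make the bracket a negative power of $X$. The paper's proof differs only in parameters: it takes $l=k(k+1)$ and $Y=X^{1-1/(40k^2)}$, gets the bracket $\ll X^{-19/80}$, and uses $4l<20k^2\log k$ to land exactly on $19/(1600k^2\log k)$, whereas your $Y=X^{7/8}$ and $l=\lceil k(k+1)/2\rceil$ give the stronger saving $\asymp 1/k^2$. The normalisation obstacle you flag goes away once the outer exponent in Lemma 2.2 is read as $1/(4l)$ rather than $1/(4k)$; this transcription slip is what the paper's numerics implicitly correct. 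With that reading, $X^{2l+k(k+1)/2}J_l^{(k)}(X)\ll X^{4l+\epsilon}$ yields exactly the factor $X$, and the inequality coincides with your guessed form $X\{X^{-2l+k(k+1)/2}J_l^{(k)}(X)(\cdots)\}^{1/(4l)}$.
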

\begin{proof}
    Substitute the formula (5) into formula (6), setting $l=k(k+1)$, we have
    \[
    |\sum\limits_{n=1}^{X}e(f(n))|\ll Y+(3k)^{1/2}X(\frac{1}{q}+\frac{1}{Y}+\frac{q\log{q}}{YX^k})^{\frac{1}{4k}}
    \]
    Then we choose $Y=X^{1-\frac{1}{40k^2}}$. We have
    \[
    \frac{1}{q}+\frac{1}{Y}+\frac{q\log{q}}{YX^k}\ll kN^{-\frac{1}{4}+\frac{1}{40k^2}}\log{X}\ll kX^{-\frac{19}{80}}
    \]
    and we have $2<l<5k^2\log{k}$ So we put them together and get this inequality.
\end{proof}
\begin{lemma}\emph{(Mean value theorem)}
    For any $\epsilon>0$, We have
    \[
\int_{[0,1]}|\sum\limits_{n=1}^{X}e(\alpha n^k)|^{2m}\mathrm{d}\alpha\ll (X^{2m-k}+X^m)X^{\epsilon}
    \]
\end{lemma}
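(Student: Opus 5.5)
The plan is to deduce the one-dimensional estimate from Lemma 2.1 (Vinogradov's main conjecture) by comparing the single equation with the full Vinogradov system. Write $f_k(\alpha)=\sum_{n\le X}e(\alpha n^k)$ and $F(\boldsymbol\alpha)=\sum_{n\le X}e(\alpha_1n+\dots+\alpha_kn^k)$. By orthogonality, $\int_0^1|f_k(\alpha)|^{2m}\mathrm{d}\alpha$ counts the solutions of $\sum_{i\le m}(x_i^k-y_i^k)=0$ with $1\le x_i,y_i\le X$. For each such solution, set $h_j=\sum_{i\le m}(x_i^j-y_i^j)$ for $1\le j\le k-1$, so that $|h_j|\le mX^j$. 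Writing $\mathbf h=(h_1,\dots,h_{k-1})$, this gives
\[
\int_0^1|f_k(\alpha)|^{2m}\mathrm{d}\alpha=\sum_{\mathbf h}\int_{[0,1]^k}|F(\boldsymbol\alpha)|^{2m}e\Big(-\sum_{j<k}h_j\alpha_j\Big)\mathrm{d}\boldsymbol\alpha\le\#\{\mathbf h\}\cdot J_m^{(k)}(X).
\]
The inequality holds because the integrand $|F|^{2m}$ is nonnegative, so each twisted integral is bounded by the untwisted one. Since $\#\{\mathbf h\}\ll_{m,k}X^{k(k-1)/2}$, inserting Lemma 2.1 gives $X^{\epsilon}\big(X^{m+k(k-1)/2}+X^{2m-k}\big)$. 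Once $m\ge k(k+1)/2$ the first term is at most $X^{2m-k}$, so this proves the lemma in that range.

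For $m<k(k+1)/2$ the counting of $\mathbf h$ above is too lossy, and I would treat the diagonal and off-diagonal solutions separately. The diagonal solutions, where $\{x_i\}$ is a permutation of $\{y_i\}$, contribute $\ll_m X^m$, which is the $X^m$ term in the statement. For the remaining solutions I would avoid summing over all $\mathbf h$. Instead I would use the translation-invariance of the Vinogradov system: shifting $n\mapsto n+t$ and averaging over $t\le X$ transfers the lower-degree constraints onto the shift parameter. The resulting bound has the shape
\[
\int_0^1|f_k|^{2m}\ll X^{\epsilon}\Big(X^m+X^{-1}\sum_{t\le X}\#\{\text{off-diagonal solutions with } h_j \text{ determined by } t\}\Big),
\]
and I would then reapply Lemma 2.1 at degree $k$ with the reduced number of free $h$'s.

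After that, I would interpolate by Hölder's inequality between the trivial endpoint $m_\ast=1$ (giving $X$) and the endpoint $m^\ast=k(k+1)/2$ from the first paragraph:
\[
\int|f_k|^{2m}\le\Big(\int|f_k|^{2m_\ast}\Big)^{\lambda}\Big(\int|f_k|^{2m^\ast}\Big)^{1-\lambda},\qquad m=\lambda m_\ast+(1-\lambda)m^\ast .
\]
I would check whether the exponent obtained this way can be pushed down to $\max(m,2m-k)$, using the diagonal extraction at each step.

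I expect the intermediate range $2<m<k(k+1)/2$ to be the main obstacle. There the target bound $X^{m+\epsilon}$ is a strong, Hua-type diagonal-dominance statement for a single equation. Neither the reduction to the full system nor naive interpolation obviously reaches it, since each loses a power $X^{k(k-1)/2}$ or a convex combination of endpoint exponents. My first attempt would be the shift-averaging step above, aiming to show that the off-diagonal contribution is $\ll X^{2m-k+\epsilon}$ uniformly in $m$. If that fails, I would fall back on the range $m\ge k(k+1)/2$, which is the only range actually used when the lemma is applied later with $m_0,m_1$ large.
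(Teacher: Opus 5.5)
Your first paragraph is a correct, self-contained proof for $m\ge k(k+1)/2$. You slice the solutions of $\sum_i(x_i^k-y_i^k)=0$ by $\mathbf h=(h_1,\dots,h_{k-1})$, bound each twisted integral by $J_m^{(k)}(X)$ using positivity, and combine $\#\{\mathbf h\}\ll X^{k(k-1)/2}$ with $J_m^{(k)}(X)\ll X^{\epsilon}(X^m+X^{2m-k(k+1)/2})$. This gives $X^{2m-k+\epsilon}$ in that range. (You rightly use this form of the Bourgain--Demeter--Guth bound, which holds for all $m$, rather than the misprinted second case of Lemma 2.1.) The paper gives no argument for this lemma, only a citation to Wooley. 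In this range your proposal therefore supplies what that citation can legitimately support.

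The genuine gap is the range $m<k(k+1)/2$. The proposal contains no proof there, and the tools you list cannot provide one.

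\emph{Hölder interpolation.} Interpolating between $m_\ast=1$ and $m^\ast=k(k+1)/2$ only yields the chord of the exponent function, namely $1+\frac{2(k+1)}{k+2}(m-1)$. Since $m\mapsto\max(m,2m-k)$ is convex with a kink at $m=k$, this chord lies strictly above the target for every $1<m<k(k+1)/2$. At $m=k$ it is roughly $2k$ rather than $k$.

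\emph{Shift-averaging.} This step is not an actual argument. Translation-invariance is a property of the full Vinogradov system. For the single equation one has $\sum_i\bigl((x_i+t)^k-(y_i+t)^k\bigr)=\sum_{1\le j\le k}\binom{k}{j}t^{k-j}h_j$, so shifting reshuffles the lower-degree $h_j$ rather than removing them. The displayed bound with $X^{-1}\sum_{t\le X}$ is never derived.

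\emph{The range is open.} More fundamentally, in this range the inequality is, in general, an open problem:
\begin{itemize}
\item $k=m=3$ asks for $\int_0^1|f_3|^6\,\mathrm{d}\alpha\ll X^{3+\epsilon}$, whereas Hua's lemma with Cauchy--Schwarz gives only $X^{7/2+\epsilon}$;
\item $m=k$ is an averaged form of Hypothesis K;
\item $m\approx 2k$, fed into the standard circle-method argument with any fixed Weyl-type saving on the minor arcs, would give the asymptotic formula in Waring's problem with $O(k)$ variables, whereas the known range is of order $k^2$.
\end{itemize}
So neither your argument nor the paper's citation establishes the lemma for all $m$.

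Your fallback also does not cover how the lemma is actually used. In the bound for $T_2$ in Section 3 it is invoked with $2m=k_1m_0$ and then $k_1=1$, i.e.\ $2m=m_0$, with $m_0$ of size $4k$ or $8k\log k$. That is below $k(k+1)/2$ once $k$ is large, which is exactly the range you cannot prove.
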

\begin{proof}
    The theorem comes from \cite{wooley2015discretefourierrestrictionefficient}, as another mean value theorem which will be used on the minor arc.
\end{proof}
\begin{lemma}\emph{(Hua's Inequality)}
    Let $P(x)=a_kx^k+...+a_1x\in\textbf{Z}[x]$. We set the complete exponential sum as follows:
    \[
    S(q,P(x))=\sum\limits_{l\leq q}e(\frac{P(l)}{q})
    \]
    Then we have
    \[
    |S(q,P(x))|\leq C(k)q^{1-\frac{1}{k}}
    \]
\end{lemma}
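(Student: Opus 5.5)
The plan is to follow Hua's classical argument for complete exponential sums: first reduce to prime-power moduli, then treat the prime and prime-power cases separately.

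\emph{Step 1: multiplicativity.} Write $q=q_1q_2$ with $(q_1,q_2)=1$. By the Chinese remainder theorem we put $l=l_1q_2\bar q_2+l_2q_1\bar q_1$, where $l_i$ runs modulo $q_i$. Then $e(P(l)/q)$ factors, and
\[
S(q,P)=S(q_1,P_1)\,S(q_2,P_2),
\]
where each $P_i$ is again a polynomial of degree $k$ with suitably twisted integer coefficients. Implicitly we assume the content $(a_k,\dots,a_1,q)=1$, as in Hua's theorem. This coprimality is inherited by the $P_i$. It therefore suffices to prove $|S(p^t,P)|\le C_0(k)\,p^{t(1-1/k)}$ for prime powers, with $C_0(k)=1$ for all but finitely many primes $p$ (those $p$ with $p\le k$ or $p\mid$ some fixed quantity). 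The product of the exceptional constants then gives a constant $C(k)$.

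\emph{Step 2: the case $t=1$.} If $P$ is nonconstant modulo $p$, Weil's bound gives $|S(p,P)|\le(k-1)p^{1/2}\le (k-1)p^{1-1/k}$ for $k\ge2$. If $P$ is constant modulo $p$ (impossible for $p>k$ under the content condition, apart from Fermat-type degeneracies $x^p\equiv x$ when $p\le k$), the sum is trivially at most $p\le p^{1-1/k}\cdot p^{1/k}$. The factor $p^{1/k}$ is bounded in terms of $k$ when $p\le k$, so it is absorbed into $C_0(k)$.

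\emph{Step 3: the case $t\ge2$.} Write $l=u+p^{t-1}v$ (or $l=u+p^{\lceil t/2\rceil}v$) and expand $P(u+p^{s}v)$ by Taylor's formula. For $s\ge t/2$ the sum over $v$ is a complete linear sum, which vanishes unless $p^{t-s}\mid P'(u)$. Hence $S(p^t,P)$ is $p^{s}$ times a sum over the roots $u$ of the congruence $P'(u)\equiv0\pmod{p^{t-s}}$. One then groups these according to the roots $\xi$ of $P'$ modulo $p$ and their multiplicities $\mu\le k-1$. Around each root, substituting $u=\xi+pw$ yields a new complete sum modulo $p^{t-\nu}$ with $\nu\ge1$, for a polynomial whose content has dropped by a controlled amount (at most $\mu+1$). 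Induction on $t$ then gives the exponent $1-1/k$. The total number of roots is at most $k-1$, which keeps the constants bounded.

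The main obstacle is the bookkeeping in the inductive step. One must show that the loss from the content of the rescaled polynomial ($p^{\mu}$ at worst) is always compensated by the gain $p^{t-\nu}$, giving the exponent $(1-1/k)t$ uniformly. The small primes $p\mid k$ also have to be handled. For those I would simply bound the finitely many relevant cases ($t\le$ some $t_0(k)$) trivially and absorb them into $C(k)$. For large $t$ the same descent works with $\mu$ replaced by $\mu+v_p(k!)$, which again costs only a $k$-dependent constant.
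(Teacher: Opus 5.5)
Your outline follows Hua's method (CRT reduction, Weil at $t=1$, Taylor descent around the roots of $P'$ modulo $p$). You are right to add the hypothesis $(a_k,\dots,a_1,q)=1$, which the paper's statement omits; without it, $P\equiv 0 \pmod q$ gives $S=q$. The paper offers no argument of its own, only citations to Hua and Pan--Pan.

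\textbf{The gap.} Your plan breaks down exactly where the stated bound $C(k)q^{1-1/k}$ differs from Hua's original $C(k,\epsilon)q^{1-1/k+\epsilon}$. After Step 1 you have $|S(q,P)|\le\prod_{p^t\parallel q}C_0(p)\,p^{t(1-1/k)}$. So you need $C_0(p)=1$ for every prime $p>p_0(k)$, uniformly in $t$ and in $P$, with $p_0$ depending on $k$ alone. An exceptional set of primes dividing some quantity attached to $P$ does not help, because it may contain arbitrarily many prime factors of $q$. Your steps do not deliver constant $1$:
\begin{itemize}
\item Step 2 proves only $|S(p,P)|\le (k-1)p^{1-1/k}$.
\item Step 3 says only that the constants stay bounded because there are at most $k-1$ roots. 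Adding up to $k-1$ local sums, each $O(p^{t(1-1/k)})$, gives a constant of size about $k$ at every prime.
\end{itemize}
Multiplied over the primes dividing $q$, this yields only $(k-1)^{\omega(q)}q^{1-1/k}\ll_\epsilon q^{1-1/k+\epsilon}$, which is Hua's bound and not the statement.

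\textbf{The missing idea.} You need a local estimate with constant exactly $1$, obtained by trading the number of critical points against their multiplicities. For $p>p_0(k)$, prove by induction on $t$ that $|S(p^t,P)|\le p^{t(1-1/d)}$, where $d=\deg(P \bmod p)$.
\begin{itemize}
\item At a root $\xi$ of $P'$ modulo $p$ of multiplicity $\mu$, substitute $x=\xi+py$. This gives $S_\xi=e(P(\xi)/p^t)\,p^{\delta-1}S(p^{t-\delta},g)$ with $2\le\delta\le\mu+1$ and $\deg(g\bmod p)\le\delta$; if $t\le\delta$, instead $|S_\xi|=p^{t-1}$.
\item Hence $|S_\xi|\le p^{\delta-1}p^{(t-\delta)(1-1/\delta)}=p^{t(1-1/\delta)}$, with no loss at all.
\item If $P'$ has a single root modulo $p$, then $\mu\le d-1$ and you are done.
\item If it has $r\ge2$ roots, every $\mu_i\le d-2$. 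Then $\sum_i|S_{\xi_i}|\le (d-1)p^{t(1-1/(d-1))}\le p^{t(1-1/d)}$, because $p^{t/(d(d-1))}\ge k-1$ for $t\ge2$ once $p\ge (k-1)^{k(k-1)/2}$.
\item The base case $t=1$ needs $(d-1)\sqrt p\le p^{1-1/d}$ for $2\le d\le k$, which again holds for $p$ large in terms of $k$.
\end{itemize}
This dichotomy is the substance of the $\epsilon$-free refinement (Chen, Ste\v{c}kin), and your plan does not contain it.

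\textbf{Small primes.} Your treatment of $p\le k$ is also incomplete. Replacing $\mu$ by $\mu+v_p(k!)$ in the descent changes the local exponent $1-1/(\mu+1)$ itself, which can then exceed $1-1/k$. Moreover, the descent is iterated roughly $t/\delta$ times, so any fixed loss per step compounds with $t$ rather than staying a constant. You would have to show that such losses occur only boundedly often.
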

    This theorem is the improvement of Hua's inequality \cite{Hua1963}. This form can be found in \cite{PP1986}, as a special version of Weil's Bound.
\begin{lemma}
Let $A>0$ is any given constant.There exist a constant $B=B(A)>0$ , and for any $Q>1$ and $x>3$, we definite
\[
E(x;q,a)=\pi(x;q,a)-\frac{\mathrm{Li}x}{\varphi(q)}
\]
for any $A>0,1/2<\theta<1,\epsilon>0,Q<X^{1/2-\epsilon}$ we have
\[
    \sum\limits_{q\leq Q}\sum_{1\leq a\leq q,(a,q)=1}\max\limits_{x<u<2x}|E(u;q,a)|^2\ll Qx^{k/2}(\log{x})^2
\]
\end{lemma}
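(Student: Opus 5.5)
The plan is to use GRH pointwise and then sum trivially over the moduli and residues. No large-sieve machinery is needed, because the right-hand side $Qx^{k/2}(\log x)^2$ is very generous once $k$ is large. Throughout, $k$ is the (large, odd) exponent fixed in the Preparation Chapter. I read the statement with $x$ in the role of $X$, so the hypothesis is $Q<x^{1/2-\epsilon}$.

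First I will record the GRH prime number theorem for progressions in the $\psi$-form. For $(a,q)=1$ and $u\ge 2$,
\[
\psi(u;q,a)-\frac{u}{\varphi(q)}\ll u^{1/2}(\log u)^2 .
\]
This follows from the explicit formula for $\psi(u,\chi)$, summed over the $\varphi(q)$ characters with orthogonality, since under GRH every nontrivial zero of $L(s,\chi)$ has real part $1/2$. The bound is uniform in $q\le u$.

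Next I pass from $\psi$ to $\pi$ by partial summation. The prime powers $p^j$ with $j\ge2$ contribute $O(u^{1/2})$. Writing $\theta(u;q,a)=u/\varphi(q)+O(u^{1/2}\log^2 u)$ and integrating against $1/\log t$ gives
\[
E(u;q,a)\ll u^{1/2}\log u
\]
uniformly for $u\in(x,2x)$ and $(a,q)=1$; in any case the bound is at most $u^{1/2}(\log u)^2$. The main term becomes $\mathrm{Li}(u)/\varphi(q)$ up to a constant that is absorbed into the error.

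I then sum this uniform bound: $\max_{x<u<2x}|E(u;q,a)|^2\ll x(\log x)^2$. There are at most $\varphi(q)\le q$ residues for each $q$, so
\[
\sum_{q\le Q}\sum_{(a,q)=1}\max_{x<u<2x}|E(u;q,a)|^2\ll Q^2x(\log x)^2 .
\]
Finally I compare with the target. Since $Q<x^{1/2-\epsilon}$, we get $Q^2x\le Q\,x^{3/2-\epsilon}\le Qx^{k/2}$ for every $k\ge3$, and the powers of $\log x$ match. This gives the stated inequality with an implied constant independent of $q$, $a$ and $Q$.

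The only point needing care is the uniformity of the GRH error term in $q$. One must check that the standard explicit-formula argument gives $u^{1/2}\log^2(qu)$ with $q\le u$, so that the error is $\ll u^{1/2}\log^2u$ with no dependence on $q$. This is classical (Davenport, Ch.~20), so I expect no real obstacle.

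If the sharper Barban--Davenport--Halberstam shape $Qx(\log x)^2$ were actually intended, this trivial summation would not suffice. In that case I would instead combine the large sieve inequality for Dirichlet characters with the GRH bound to control the small moduli, following \cite{BDH}.
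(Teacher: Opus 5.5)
Your argument is correct and proves the lemma as stated. It shares the paper's basic ingredient, GRH applied character by character, but handles the sum over $q$ and $a$ differently.

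The paper first expands the error by orthogonality as $\varphi(q)^{-1}\sum_{\chi\neq\chi_0}\bar\chi(a)\psi(x,\chi)$. It then replaces the power of this sum by $\varphi(q)^{-2}\sum_{\chi\neq\chi_0}|\psi(x,\chi)|^k$. The resulting weight $\varphi(q)^{-2}(\varphi(q)-1)$ sums over $a$ and $q$ to $\ll Q$, so the paper never uses $Q<x^{1/2-\epsilon}$ or $k\ge 3$.

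That weight is what Parseval over the characters gives for the second moment $\sum_a|\cdot|^2$ at one fixed $u$. It is not available once $\max_u$ sits inside the sum over $a$, nor for the $k$-th powers that the paper's proof actually writes. A pointwise bound, for instance H\"older over the $\varphi(q)-1$ characters, only gives weight $\varphi(q)^{-1}$, and that leads back to your trivial total $Q^2x(\log x)^{O(1)}$.

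You accept this loss and absorb it into the generous right-hand side via $Q^2x\le Qx^{3/2-\epsilon}\le Qx^{k/2}$. This is exactly where the hypothesis on $Q$ is used, and it makes your route the sound one for the stated inequality.

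Its limitation is that it rests entirely on the slack in $x^{k/2}$. It gives nothing toward a Barban--Davenport--Halberstam type bound $Qx(\log x)^{O(1)}$. Nor would it cover the $k$-th moment version in the paper's proof, where trivial summation gives $Q^2x^{k/2}(\log x)^{k}$.
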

\begin{proof}
 To our knowledge We know
\[
\pi(x;q,a)-\frac{\mathrm{Li}x}{\varphi(q)}=\psi(x;q,a)-\frac{x}{\varphi(q)}+O(x^{1/2}(\log{x})^2)
\]
So we know
\begin{align*}
\sum\limits_{q\leq Q}\sum_{1\leq a\leq q,(a,q)=1}\max\limits_{x<u<2x}|E(u,\theta;q,a)|^k
=&\sum\limits_{q\leq Q}\sum_{1\leq a\leq q,(a,q)=1}\varphi^{-2}(q)\sum\limits_{\chi\neq\chi_0}|\bar{\chi}(a)|^k|\psi(x,\chi)|^k\\
<&\sum\limits_{q\leq Q}\sum_{1\leq a\leq q,(a,q)=1}\varphi^{-2}(q)\sum\limits_{\chi\neq\chi_0}|\psi(x,\chi)|^k
\end{align*}
Under GRH, when $\theta>1/2$ we know
\[
\psi(x,\chi)\ll x^{1/2}(\log{x})^2
\]
So we know
\[
\sum\limits_{q\leq Q}\sum_{1\leq a\leq q,(a,q)=1}\max\limits_{x<u<2x}|E(u;q,a)|^k\ll x^{k/2}\sum\limits_{q\leq Q}\sum_{1\leq a\leq q,(a,q)=1}\varphi^{-2}(q)(\varphi(q)-1)\ll Qx^{k/2}(\log{x})^2
\]
So we prove the theorem.
\end{proof}
\begin{remark}
    Here we will explain the way we use those lemmas. Lemma 2.1 and Lemma 2.2 is to calculate the Lemma 2.3, and we will use this tool and Lemma 2.5 to bound the integral in minor arc. Using the bound of Lemma 2.6, we can transform the form of exponential sum in condition that $\theta=1/2+\epsilon$
\end{remark}
\section{The Integral in Major Arc}
Now we need to transform the Integral in major arcs. The steps is almost the same as \cite{wei2014sumspowersequalprimes}, but considering that we have a better estimation of the exponential sum under GRH, and the form of our integral is different from the form in \cite{wei2014sumspowersequalprimes}, we need to write our own process here. Suppose that $a\in\mathrm{Z},q\in\mathrm{N},|z|<1/\tau$ and $(a,q)=1$.
We set
\[
N_0=mN^k,N_1=N-N^{\theta},N_2=N+N^{\theta}
\]
Then we have
\begin{align*}
S_p(z+a/q)=&\sum\limits_{N_1<p<N_2}e((z+a/q)p^k)\\
=&\sum\limits_{0\leq l\leq q-1}\sum\limits_{N_1<p<N_2,p\equiv l\bmod q}e((\lambda+a/q)p^k)\\
=&\sum\limits_{0\leq l\leq q-1}e(al^k/q)\sum\limits_{N_1<p<N_2,p\equiv l\bmod q}e(z p^k)=C_q(k)\sum\limits_{N_1<p<N_2,p\equiv l\bmod q}e(zp^k)
\end{align*}
Considering the smoothing of $\pi(x;q,a)$, and we have
\begin{align*}
\sum\limits_{N_1<p<N_2,p\equiv l\bmod q}e(zp^k)\sim&\int_{N_1}^{N_2}e(zu^k)\mathrm{d}\pi(u;q,l)\\
\sim&\int_{N_1}^{N_2}e(zu^k)\mathrm{d}\frac{\mathrm{Li}u}{\varphi(q)}+\int_{N_1}^{N_2}e(zu^k)\mathrm{d}E(u;q,l)\\
=&\int_{N_1}^{N_2}\frac{e(zu^k)}{\varphi(q)\log{u}}\mathrm{d}u+\int_{N_1}^{N_2}e(zu^k)\mathrm{d}E(u;q,l)
\end{align*}
Base on the theorem 1.1, our half goal in this chapter is
\begin{align}
I_{11}^*=\int_{\mathfrak{M}_1}S_p^{m_0}(t)e(-tN_0)\mathrm{d}t
\sim&\Delta(Q)\int_{-1/\tau}^{1/\tau}(\sum\limits_{{N_1}<u<{N_2}}\frac{e(zu^k)}{\log{N}})^{m_0}e(-zN_0)\mathrm{d}t\\
&+O(N^{(2\theta-1)m_0}(\log N)^{-2m_0})
\end{align}
We know 
\begin{align*}
I_{11}^*=&\sum\limits_{q<Q}\sum\limits_{a<q,(a,q)=1}\int_{|t-\frac{a}{q}|<\frac{1}{\tau}}(\sum\limits_{N_1<p<N_2}e(tp^k))^{m_0}e(-N_0t)\mathrm{d}t\\
=&\sum\limits_{q<Q}\sum\limits_{a<q,(a,q)=1}\int_{-1/\tau}^{1/\tau}(\sum\limits_{N_1<p<N_2}e((z+a/q)p^k))^{m_0}e(-N_0(z+a/q))\mathrm{d}z\\
\sim&\sum\limits_{q<Q}C_q^m(k)C_q(N_0)\int_{-1/\tau}^{1/\tau}
\left\{
\sum\limits_{N_1<u<N_2}\frac{e(zu^k)}{\varphi(q)\log{u}}+\int_{N_1}^{N_2}e(zu^k)\mathrm{d}E(u;q,l)
\right\}
^{m_0}e(-N_0t)\mathrm{d}z
\end{align*}
We expand the formula inside directly, and we need to prove that the cross partition is far less than the main term below. We have
\begin{align}
&\left\{
\sum\limits_{N_1<u<N_2}\frac{e(zu^k)}{\varphi(q)\log{u}}+\int_{N_1}^{N_2}e(zu^k)\mathrm{d}E(u;q,l)
\right\}^{m_0}\\
=&\bigg(\sum\limits_{N_1<u<N_2}\frac{e(zu^k)}{\varphi(q)\log{u}}\bigg)^{m_0}+\sum\limits_{1\leq r\leq m_0}C_{m_0}^r\bigg(\sum\limits_{N_1<u<N_2}\frac{e(zu^k)}{\varphi(q)\log{N}}\bigg)^{m_0-r}\bigg(\int_{N_1}^{N_2}e(zu^k)\mathrm{d}E(u;q,l)\bigg)^r
\end{align}
Considering
\begin{equation}
\int_{N_1}^{N_2}e(zu^k)\mathrm{d}E(u;q,l)=E(u;q,l)e(zu^k)\bigg|_{N_1}^{N_2}-\int_{N_1}^{N_2}zku^{k-1}E(u;q,l)e(zu^k)\mathrm{d}u
\end{equation}
It is obvious that $E(N_1;q,l)=E(N_2,q,l)+O(\log{N})$. From the mean value theorem of integral we know there is a $u_i(i=1,2)$:
\begin{align*}
\int_{N_1}^{N_2}e(zu^k)\mathrm{d}E(u;q,l)
=&E(u;q,l)(e(zN_1^k)-e(zN_2^k))+iE(u_1;q,l)\Im{\int_{N_1}^{N_2}zku^{k-1}e(zu^k)\mathrm{d}u}\\
+&E(u_2;q,l)\Re{\int_{N_1}^{N_2}zku^{k-1}e(zu^k)\mathrm{d}u}+O(\log{N})\\
=&E(u;q,l)(e(zN_1^k)-e(zN_2^k))+E(u_1;q,l)\int_{N_1}^{N_2}zku^{k-1}e(zu^k)\mathrm{d}u\\
+&(E(u_1;q,l)-E(u_2;q,l))\Re\int_{N_1}^{N_2}zku^{k-1}e(zu^k)\mathrm{d}u+O(\log{N})\\
=&E(u;q,l)(e(zN_1^k)-e(zN_2^k))+E(u_1;q,l)\int_{N_1}^{N_2}zku^{k-1}e(zu^k)\mathrm{d}u+O(\log{N})
\end{align*}
So we have
\begin{align*}
\bigg|\int_{N_1}^{N_2}e(zu^k)\mathrm{d}E(u;q,l)\bigg|\ll
(E(u;q,l)+E(u_0;q,l))(e(zN_1^k)-e(zN_2^k))
\end{align*}
Substitute it into (9), we know,
\begin{align*}
&\left\{
\sum\limits_{N_1<u<N_2}\frac{e(zu^k)}{\varphi(q)\log{u}}+\int_{N_1}^{N_2}e(zu^k)\mathrm{d}E(u;q,l)
\right\}^{m_0}\\
=&\bigg(\sum\limits_{N_1<u<N_2}\frac{e(zu^k)}{\varphi(q)\log{u}}\bigg)^{m_0}+\sum\limits_{1\leq r\leq m_0}C_{m_0}^r\bigg(\sum\limits_{N_1<u<N_2}\frac{e(zu^k)}{\varphi(q)\log{u}}\bigg)^{m_0-r}\\
&(E(u;q,l)+E(u_0;q,l))^r(e(zN_1^k)-e(zN_2^k))^r
\end{align*}
So turn back to $I_{11}^*$,
\begin{align*}
&\sum\limits_{q<Q}C_q^m(k)C_q(N_0)\int_{-1/\tau}^{1/\tau}
\left\{
\sum\limits_{N_1<u<N_2}\frac{e(zu^k)}{\varphi(q)\log{u}}+\int_{N_1}^{N_2}e(zu^k)\mathrm{d}E(u;q,l)
\right\}e(-N_0z)\mathrm{d}z\\
=&\sum\limits_{q<Q}C_q^m(k)C_q(N_0)\int_{-1/\tau}^{1/\tau}
\bigg(\sum\limits_{N_1<u<N_2}\frac{e(zu^k)}{\varphi(q)\log{u}}\bigg)^{m_0}+\sum\limits_{1\leq r\leq m_0}C_{m_0}^r\\
&\bigg(\sum\limits_{N_1<u<N_2}
\frac{e(zu^k)}{\varphi(q)\log{u}}\bigg)^{m_0-r}((E(u;q,l)+E(u_0;q,l))^r(e(zN_1^k)-e(zN_2^k))^r
e(-N_0z)\mathrm{d}z\\
=&I_{111}^*+I_{112}^*
\end{align*}
Where
\[
I_{111}^*=\Delta(Q)\int_{-1/\tau}^{1/\tau}(\sum\limits_{{N_1}<u<{N_2}}\frac{e(zu^k)}{\log{u}})^{m_0}e(-zN_0)\mathrm{d}t
\]
and
\begin{align*}
I_{112}^*=&\sum\limits_{q<Q}C_q^m(k)C_q(N_0)\int_{-1/\tau}^{1/\tau}
\sum\limits_{1\leq r\leq m_0}C_{m_0}^r(\sum\limits_{N_1<u<N_2}
\frac{e(zu^k)}{\varphi(q)\log{u}})^{m_0-r}\\&(E(u;q,l)+E(u_0;q,l))^r(e(zN_1^k)-e(zN_2^k))^r
e(-N_0z)\mathrm{d}z
\end{align*}
For $I_{111}^*$, we know
\begin{align}
\bigg|\sum\limits_{N_1<u<N_2}\frac{e(zu^k)}{\log{u}}-\sum\limits_{N_1<u<N_2}\frac{e(zu^k)}{\log{N}}\bigg|
=&\bigg|\sum\limits_{N_1<u<N_2}{e(zu^k)}(\frac{1}{\log{u}}-\frac{1}{\log{N}})\bigg|\\
\ll&\frac{N^{\theta-1}}{(\log{N})^{2}}\sum\limits_{N_1<u<N_2}e(zu^k)
\end{align}
So we have
\begin{align*}
&I_{111}^*=\Delta(Q)\int_{-1/\tau}^{1/\tau}\bigg(\sum\limits_{{N_1}<u<{N_2}}\frac{e(zu^k)}{\log{u}}\bigg)^{m_0}e(-zN_0)\mathrm{d}z\\
=&\Delta(Q)\int_{-1/\tau}^{1/\tau}(\sum\limits_{{N_1}<u<{N_2}}\frac{e(zu^k)}{\log{N}})^{m_0}e(-zN_0)\mathrm{d}z+O(N^{(2\theta-1)m_0}(\log N)^{-2m_0})
\end{align*}
For $I_{112}^*$ we have
\begin{align*}
I_{112}^*=
&\sum\limits_{1\leq r\leq m_0}C_{m_0}^r\sum\limits_{q<Q}C_q^m(k)C_q(N_0)\int_{-1/\tau}^{1/\tau}\bigg(\sum\limits_{N_1<u<N_2}\frac{e(zu^k)}{\varphi(q)\log{u}}\bigg)^{m_0-r}\\[2pt]
&\bigg|E(u;q,l)+E(u_0;q,l)\bigg|^r(e(zN_1^k)-e(zN_2^k))^r
e(-N_0z)\mathrm{d}z\\[2pt]
&\ll
\sum\limits_{1\leq r\leq m_0}2^rC_{m_0}^r\sum\limits_{q<Q}\Big|C_q^m(k)C_q(N_0)(\varphi(q)\log{N})^{r-m_0}\Big|\\[2pt]
&\int_{-1/\tau}^{1/\tau}\bigg
(\sum\limits_{N_1<u<N_2}e(zu^k)\bigg)^{m_0}
(E(u;q,l)+E(u_0;q,l))^r\mathrm{d}z(1+O(N^{\theta-1}(\log{N})^{-2}))
\end{align*}
Therefore we know
\begin{align*}
I_{112}^*&\ll \max\limits_j\sum\limits_{q<Q}C_q^m(k)C_q(N_0)(\varphi(q)\log{N})^{r-m_0}\int_{-1/\tau}^{1/\tau}\bigg
(\sum\limits_{N_1<u<N_2}e(zu^k)\bigg)^{m_0}\\[2pt]
&\bigg|E(u;q,l)+E(u_0;q,l)\bigg|^r\mathrm{d}z\,(1+O(N^{\theta-1}(\log{N})^{-2}))
\end{align*}
So what we need to do is to find the main term. What's more, we just need to calculate the bound of every part, and find the maximum of those bounds. From Holder inequality we know
\begin{align*}
&\int_{-1/\tau}^{1/\tau}\bigg
(\sum\limits_{N_1<u<N_2}e(zu^k)\bigg)^{m_0}
\bigg|E(u;q,l)+E(u_0;q,l)\bigg|^r\mathrm{d}z\\
\ll&\bigg(\int_{-1/\tau}^{1/\tau}\bigg
|\sum\limits_{N_1<u<N_2}e(zu^k)\bigg|^{k_1m_0}\mathrm{d}z\bigg)^{1/k_1}\bigg(\int_{-1/\tau}^{1/\tau}\bigg
|E(u;q,l)+E(u_0;q,l)\bigg|^{k_2r}\mathrm{d}z\bigg)^{1/k_2}
\end{align*}
Where $k_1^{-1}+k_2^{-1}=1$. So we know
\begin{align*}
&\sum\limits_{q<Q}C_q^m(k)C_q(N_0)(\varphi(q)\log{N})^{r-m_0}\int_{-1/\tau}^{1/\tau}\bigg
(\sum\limits_{N_1<u<N_2}e(zu^k)\bigg)^{m_0}
\bigg|E(u;q,l)+E(u_0;q,l)\bigg|^r\mathrm{d}z\\
\ll&\sum\limits_{q<Q}\Big|C_q^m(k)C_q(N_0)(\varphi(q)\log{N})^{r-m_0}\Big|\bigg(\int_{-1/\tau}^{1/\tau}\bigg
(\sum\limits_{N_1<u<N_2}e(zu^k)\bigg)^{k_1m_0}\mathrm{d}z\bigg)^{1/k_1}\\
\times&\bigg(\int_{-1/\tau}^{1/\tau}\bigg
|E(u;q,l)+E(u_0;q,l)\bigg|^{k_2r}\mathrm{d}z\bigg)^{1/k_2}
\end{align*}
Then we know
\begin{align*}
I_{112}^*\ll&\bigg|\sum\limits_{q<Q}|C_q^{mw_1}(k)C_q^{w_1}(N_0)\varphi^{(r-m_0)w_1}(q)|\bigg|^{1/w_3}\\
\times&\bigg(\sum\limits_{q<Q}\bigg(\int_{-1/\tau}^{1/\tau}\bigg|\sum\limits_{N_1<u<N_2}e(zu^k)\bigg|^{k_1m_0}\mathrm{d}z\bigg)^{w_2/k_1}\bigg)^{1/w_2}\\
\times&\bigg(\sum\limits_{q<Q}\bigg(\int_{-1/\tau}^{1/\tau}\bigg
|E(u;q,l)+E(u_0;q,l)\bigg|^{k_2r}\mathrm{d}z\bigg)^{w_3/k_1}\bigg)^{1/w_3}
\end{align*}
and here we use Holder inequality again, where $w_1^{-1}+w_2^{-1}+w_3^{-1}=1$. Here we know that $I_{112}^*\ll T_1^{1/w_1}T_2^{1/w_2}T_3^{1/w_3}$, where
\begin{align*}
\qquad&T_1=\bigg|\sum\limits_{q<Q}|C_q^{m_0w_1}(k)C_q^{w_1}(N_0)\varphi^{(r-m_0)w_1}(q)|\bigg|\\
&T_2=\sum\limits_{q<Q}\bigg(\int_{-1/\tau}^{1/\tau}\bigg|\sum\limits_{N_1<u<N_2}e(zu^k)\bigg|^{k_1m_0}\mathrm{d}z\bigg)^{w_2/k_1}\\
&T_3= \sum\limits_{q<Q}\bigg(\int_{-1/\tau}^{1/\tau}\bigg
|E(u;q,l)+E(u_0;q,l)\bigg|^{k_2r}\mathrm{d}z\bigg)^{w_3/k_2}
\end{align*}
Now we calculate them one by one. From lemma 2.4 and $\varphi(q)\ll q(\log{\log{q}})^{-1}$ we have
\begin{align*}
T_1\ll&\bigg|\sum\limits_{q<Q}q^{(1-\frac{1}{k})m_0w_1+\frac{w_1}{2}}\varphi^{w_1(r-m_0)}(q)\bigg|\\
\ll&\sum\limits_{q<Q}q^{(1-\frac{1}{k})m_0w_1+\frac{w_1}{2}+w_1(r-m_0)}(\log{\log{q}})^{-w_1(r-m_0)}\\
\ll&Q^{-\frac{m_0w_1}{k}+\frac{w_1}{2}+w_1r+1}(\log{\log{Q}})^{-w_1(r-m_0)}
\end{align*}
From lemma 2.4 we have
\begin{align*}
\qquad T_2\ll&\sum\limits_{q<Q}(\frac{2}{\tau})^{w_2/k_1}\bigg(\int_{-1/2<z<1/2}\bigg|\sum\limits_{N_1<u<N_2}e(zu^k)\bigg|^{k_1m_0}\mathrm{d}z\bigg)^{w_2/k_1}\\
=&\sum\limits_{q<Q}(\frac{2}{\tau})^{w_2/k_1}\bigg(\int_{0<z<1}\bigg|\sum\limits_{N_1<u<N_2}e(zu^k)\bigg|^{k_1m_0}\mathrm{d}z\bigg)^{w_2/k_1}\\
\ll&\sum\limits_{q<Q}(\frac{2}{\tau})^{w_2/k_1}N^{\epsilon}(N^{\theta(k_1m_0-k)}+N^{\theta(k_1m_0/2)})^{w_2/k_1}\ll Q\tau^{-w_2/k_1}N^{\epsilon}\max\left\{N^{\theta(k_1m_0-k)},N^{\theta(k_1m_0/2)}\right\}
\end{align*}
From lemma 2.6 we have
\begin{align*}
\qquad T_3\ll& \sum\limits_{q<Q}\bigg(\int_{-1/\tau}^{1/\tau}\max\limits_{u}\big
|E(u;q,l)\big|^{k_2r}\mathrm{d}z\bigg)^{w_3/k_2}\\
\ll&\sum\limits_{q<Q}(\frac{2}{\tau})^{w_3/k_2}\max\limits_{u}\big
|E(u;q,l)\big|^{w_3r}\ll Q\tau^{-w_3/k_2}N^{w_3r/2}
\end{align*}
Combined this estimation together, we know
\begin{align*}
\qquad I_{112}^*\ll& Q^{-\frac{m_0}{k}+\frac{1}{2}+(m_0-r)+w_1^{-1}+w_2^{-1}+w_3^{-1}}\tau^{k_1^{-1}+k_2^{-1}}N^{\epsilon}\max\left\{N^{\theta(k_1m_0-k)w_2^{-1}+\frac{r}{2}},N^{\theta(k_1m_0/2)w_2^{-1}+\frac{r}{2}}\right\}\\
\ll&Q^{-\frac{m_0}{k}+\frac{1}{2}+(m_0-r)+1}\tau N^{\epsilon}\max\left\{N^{\theta(k_1m_0-k)w_2^{-1}+\frac{r}{2}},N^{\theta(k_1m_0/2)w_2^{-1}+\frac{r}{2}}\right\}\\
\ll&N^{(-\frac{m_0}{k}+\frac{3}{2}+(m_0-r))g(\theta)+k-\frac{3}{2}+\theta+\epsilon}\max\left\{N^{\theta(k_1m_0-k)w_2^{-1}},N^{\theta(k_1m_0/2)w_2^{-1}}\right\}
\end{align*}
Where $g(\theta)=\theta-\frac{1}{2}$. In this condition, we know the exponent is increasing with $k_1,r$ and decreasing with $w_3$. So we choose $(w_1,w_2,w_3,k_1,k_2)=(1,\infty,\infty,1,\infty)$ and $r=m_0$, for any given $\epsilon>0$, we have
\[
I_{112}^*\ll N^{(-\frac{m_0}{k}+\frac{3}{2}+(m_0-r))(\theta-\frac{1}{2})+k-\frac{3}{2}+\theta+\epsilon}
\]
Now substitute the estimation into $I_{11}^*$ and we prove the result (9). Now we turn to calculate the integral:
\[
I_{11}^*=\int_{-1/\tau}^{1/\tau}(\sum\limits_{{N_1}<u<{N_2}}{e(zu^k)}e(-zN_0)\mathrm{d}t
\]
and the series
\[
\Delta(Q)=\sum\limits_{q<Q}C_q^{*m_0}(k)C_q(m_0N^k)\varphi^{-m_0}(q)
\]
\begin{lemma}
We set
\[
\Delta(Q)=\sum\limits_{q<Q}C_q^{*m_0}(k)C_q(m_0N^k)\varphi^{-m_0}(q)
\]
And there are two constants $c_1,c_2$ not rely on $Q$, that $c_1<|\Delta(Q)|<c_2$
\end{lemma}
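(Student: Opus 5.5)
The plan is to treat $\Delta(Q)$ as a truncation of a singular series $\Delta(\infty)=\sum_{q\ge1}A(q)$, where
\[
A(q)=C_q^{*m_0}(k)\,C_q(m_0N^k)\,\varphi^{-m_0}(q).
\]
I will show that this series converges absolutely, with a tail that is small uniformly in $N$. I will then show that $\Delta(\infty)$ is bounded below by an Euler product whose factors are bounded away from zero. Throughout I read $C_q^{*}(k)$ as the reduced-residue sum $\sum_{(l,q)=1}e(al^k/q)$ attached to the fraction $a/q$, which is how it arose in Section 3, and I take $q$ large relative to $k$ where needed.

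\emph{Step 1 (size of terms).} For the reduced complete sum I will use Lemma 2.5: write it as the full sum $S(q,al^k)$ minus the contributions from $l$ divisible by primes $p\mid q$. Via multiplicativity this gives $|C^*_q(k)|\ll_k q^{1-1/k+\epsilon}$. For the Ramanujan sum I use the trivial bound $|C_q(m_0N^k)|\le\varphi(q)$, and for the totient I use $\varphi(q)\gg q^{1-\epsilon}$. Together these give
\[
|A(q)|\ll_{k,\epsilon} q^{\,m_0(1-1/k)+1-m_0+\epsilon}=q^{\,1-m_0/k+\epsilon}.
\]
Since $m_0=[8k\log k]+1>3k$, we have $1-m_0/k<-2$. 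Hence $\sum_q|A(q)|$ converges, which immediately gives the upper constant $c_2=\sum_q|A(q)|$. It also gives $|\Delta(\infty)-\Delta(Q)|\ll Q^{2-m_0/k+\epsilon}$, and this tends to $0$ because $Q=N^{\theta-1/2}\to\infty$.

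\emph{Step 2 (multiplicativity and Euler product).} By the Chinese remainder theorem, both $C^*_q$ (after summing over $a$ as in the major-arc decomposition) and $C_q(n)$ factor over coprime moduli, so $A$ is multiplicative. Therefore
\[
\Delta(\infty)=\prod_p\Big(1+\sum_{t\ge1}A(p^t)\Big).
\]
For $p\nmid k$ and $t\ge2$ the standard Hensel argument shows that the reduced Gauss-type sum vanishes, so $A(p^t)=0$. Step 1 gives $|A(p)|\ll p^{1-m_0/k+\epsilon}\le p^{-2}$. Hence the product over $p>p_0(k)$ is at least $\prod_{p>p_0}(1-Cp^{-2})\ge c>0$.

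\emph{Step 3 (small primes — the main obstacle).} For the finitely many $p\le p_0(k)$ I need $1+\sum_tA(p^t)>0$ with a lower bound independent of $N$. I would identify this local factor, in the usual way, with $\lim_{t\to\infty}p^{t}\varphi(p^t)^{-m_0}$ times the number of solutions of
\[
x_1^k+\dots+x_{m_0}^k\equiv m_0N^k \pmod{p^t},\qquad (x_i,p)=1 .
\]
The trivial solution $x_i\equiv N$ is admissible because $N$ is a large integer coprime to the small primes; this is guaranteed by the congruence condition $mN^k\equiv m\pmod{R_k}$. Since $m_0$ is much larger than $k$, the solution can be lifted from modulus $p^{\gamma}$ to all $p^t$ by Hensel's lemma, varying one coordinate with $p\nmid kx_i$, or using $p^{\gamma}$ as in the definition of $R_k$. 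This gives a density at least $p^{-\gamma(m_0-1)}$, which depends only on $k$ and $m_0$. Multiplying the local factors yields $\Delta(\infty)\ge 2c_1>0$. Then, for $Q$ large enough, Step 1 gives $|\Delta(Q)|\ge 2c_1-O(Q^{2-m_0/k+\epsilon})>c_1$; this is the only regime used, since $Q=N^{\theta-1/2}$.

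The delicate points are the Hensel lifting at $p\mid k$ (especially $p=2$) and making the local densities uniform in $N$. I expect this to be the main obstacle.
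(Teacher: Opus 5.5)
Your proof has a genuine gap in Step 3, at the base solution. You assert that $x_i\equiv N$ is a solution with all $(x_i,p)=1$ because $N$ is coprime to the small primes, and that this is guaranteed by $mN^k\equiv m\pmod{R_k}$. Neither holds:
\begin{itemize}
\item $N$ is only assumed large and odd, so $3\mid N$, $5\mid N,\dots$ are all allowed.
\item For odd $k$ one has $R_k=2$, so the congruence says nothing about odd primes. Even when $p\mid R_k$, it is compatible with $p\mid N$ as soon as $p^{\gamma}\mid m$.
\end{itemize}
Write $M^*(p^t)$ for the number of solutions of $\sum_{i\le m_0}x_i^k\equiv m_0N^k \pmod{p^t}$ with all $(x_i,p)=1$. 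When $p\mid N$, the point $x_i\equiv N$ is not counted in $M^*(p^t)$, and you have no lower bound for the local factor $\chi_p=1+\sum_{t\ge1}A(p^t)$.

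This is not a formality. Take $k$ even and $p=3$: every unit $k$-th power is $\equiv1\pmod{3^{\gamma}}$. So if $3\mid N$ and $3^{\gamma}\nmid m_0$, the congruence has no unit solution, $\chi_3=0$, $\Delta(\infty)=0$, and the lower bound you are proving is false. Your argument never uses that $k$ and $N$ are odd, so as written it would ``prove'' this case too.

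What is needed instead is the following.
\begin{itemize}
\item For $p=2$: oddness of $N$ makes $x_i\equiv N$ a unit solution, and $\gamma=1$ since $k$ is odd.
\item For odd $p\mid N$: oddness of $k$ gives $(p-1)\nmid k$. Hence the group $H$ of unit $k$-th power residues modulo $p$ has order $(p-1)/(k,p-1)\ge2$.
\item If moreover $p\nmid k$, Cauchy--Davenport gives $m_0H=\mathbf{Z}/p\mathbf{Z}$ once $m_0\ge 2k$.
\item If $p\mid k$, you need the corresponding unit-variable solubility modulo $p^{\gamma}$, the standard local lemma in Hua's treatment of the Waring--Goldbach singular series.
\end{itemize}
Since $A(p^t)=0$ for $t>\gamma$, in fact $\chi_p=p^{\gamma}M^*(p^{\gamma})\varphi(p^{\gamma})^{-m_0}$ exactly. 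So $M^*(p^{\gamma})\ge1$ is all that is required, and the Hensel lifting you flagged as the main obstacle is superfluous.

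Two smaller points.
\begin{itemize}
\item Your Euler product is for the corrected series with $\sum_{(a,q)=1}$ inside. The literal summand $C_q^{*}(k)^{m_0}C_q(m_0N^k)\varphi(q)^{-m_0}$, with $a=1$ frozen in $C_q^*$, is not multiplicative. With $C^*_q(b)=\sum_{(l,q)=1}e(bl^k/q)$ one has $C^*_{q_1q_2}(1)=C^*_{q_1}(\bar q_2)\,C^*_{q_2}(\bar q_1)$ for coprime $q_1,q_2$. So say explicitly that you are changing the definition.
\item The constant $c_2$ should be the $N$-uniform majorant $C_{k,\epsilon}\sum_q q^{1-m_0/k+\epsilon}$, not $\sum_q|A(q)|$.
\end{itemize}

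By contrast, the paper never touches local densities. It bounds the terms with $q\ge3$ by a zeta-type Euler product and argues that the terms with $q<3$ dominate. Your route does not need the small-modulus terms to be small, which is why it is the more robust one. But it puts all the weight on local solubility at $p\le p_0(k)$, which is exactly the step that is currently unjustified.
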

\begin{proof}
From lemma 2.5 we know
\begin{align*}
|\Delta(Q)|=&\sum\limits_{q<Q}|\mu(\frac{q}{(q,N_0)})C_q^{*m_0}(k)\varphi^{-m_0}(q)|\\
\ll&\sum\limits_{q=1}^\infty|q^{1-\frac{1}{k}}\varphi^{-m_0}(q)|\ll\prod\limits_{p}(1-\frac{p^{1-\frac{1}{k}}}{(p-1)^{m_0}})^{-1}
\end{align*}
For $p\geq 3$ we have $p-1>p^{1/2}$, and considering $\frac{m_0}{2}+\frac{1}{k}-1>c_3>1$, we know
\[
\prod\limits_{p}(1-\frac{p^{1-\frac{1}{k}}}{(p-1)^{m_0}})^{-1}\leq\prod\limits_{p}(1-p^{1-\frac{1}{k}-\frac{m_0}{2}})^{-1}=\zeta(\frac{m_0}{2}+\frac{1}{k}-1)<c_2
\]
Therefore we proved the right part of the inequality. For the left 
part we know $|\Delta(3)|=|e(\frac{N_0}{2})e(\frac{1}{2})|=1$, and we will prove
\[
J(Q)=|\sum\limits_{2<q<Q}\mu(\frac{q}{(q,N_0)})C_q^{*m_0}(k)\varphi^{-m_0}(q)|<1
\]
Using the same skill as the right inequality, we have
\[
J(Q)\leq \prod\limits_{p>2}(1-p^{1-\frac{1}{k}-\frac{m_0}{2}})^{-1}<(1-2^{1-\frac{1}{k}-\frac{m_0}{2}})\zeta(\frac{1}{k}+\frac{m_0}{2}-1)
\]
Finding that $|\zeta(\sigma+it)|<\frac{\sigma}{\sigma-1}$, we have
\[
J(Q)<(1-2^{1-\frac{1}{k}-\frac{m_0}{2}})\frac{\frac{1}{k}+\frac{m_0}{2}-1}{\frac{1}{k}+\frac{m_0}{2}-2}\leq 1
\]
Then we have $\Delta(Q)>\Delta(3)-J(Q)>c_1>0$.
\end{proof}
Now our next goal is to prove that
\[
\int_{-1/\tau}^{1/\tau}(\sum\limits_{{N_1}<u<{N_2}}{e(zu^k)})^{m_0}e(-zN_0)\mathrm{d}z\sim \int_{0}^{1}(\sum\limits_{{N_1}<u<{N_2}}{e(zu^k)})^{m_0}e(-zN_0)\mathrm{d}z
\]
According to the periodicity of the triangle integral, we know that we just need to prove
\[
\int_{1/\tau}^{\frac{1}{2}}(\sum\limits_{{N_1}<u<{N_2}}{e(zu^k)})^{m_0}e(-zN_0)\mathrm{d}z=o\Big(\int_{0}^{1}(\sum\limits_{{N_1}<u<{N_2}}{e(zu^k)})^{m_0}e(-zN_0)\mathrm{d}z\Big)
\]
From [WW] we know that
\[
\int_{0}^{1}(\sum\limits_{{N_1}<u<{N_2}}{e(zu^k)})^{m_0}e(-zN_0)\mathrm{d}z \asymp N^{\theta(m-1)+1-k}
\]
and we will prove the lemma follows:
\begin{lemma}
For the $\tau=N$, we have the estimation
\[
\int_{1/\tau}^{\frac{1}{2}}(\sum\limits_{{N_1}<u<{N_2}}{e(zu^k)})^{m_0}e(-zN_0)\mathrm{d}z\ll N^{(\frac{m_0}{k}-1)\theta}
\]
\end{lemma}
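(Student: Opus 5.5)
The plan is to bound the integrand in absolute value and split the range $[1/\tau,1/2]$ (with $\tau=N$) into rational approximation arcs, so that each piece can be fed into one of the earlier lemmas. First I discard the oscillating factor:
\[
\Big|\int_{1/\tau}^{1/2}S(z)^{m_0}e(-zN_0)\,\mathrm{d}z\Big|\le\int_{1/\tau}^{1/2}|S(z)|^{m_0}\,\mathrm{d}z .
\]
By Dirichlet's theorem with parameter $N^{k-1+\theta}$ (the natural scale for $u^k$ on an interval of length $N^\theta$ around $N$), every $z\in[1/\tau,1/2]$ has an approximation $|z-a/q|\le q^{-1}N^{-(k-1+\theta)}$ with $q\le N^{k-1+\theta}$. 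This splits the range into two sets:
\begin{itemize}
\item a ``local major'' set $\mathfrak{N}$, where $q\le N^{\theta/4}$ and $|z-a/q|\le N^{-(k-1+\theta)}N^{\theta/4}$;
\item its complement $\mathfrak{n}$.
\end{itemize}
The arc around $a/q=0/1$ is excluded, because $z\ge 1/\tau=1/N$ already lies outside $|z|\le N^{-(k-1+\theta)}N^{\theta/4}$ for $k\ge2$. This exclusion is the point where the hypothesis $\tau=N$ is used.

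On $\mathfrak{N}$ I would use the standard approximation
\[
S(a/q+\beta)=q^{-1}S(q,a,k)\,v(\beta)+O(q^{1+\epsilon}),\qquad v(\beta)=\int_{N_1}^{N_2}e(\beta u^k)\,\mathrm{d}u .
\]
Lemma 2.5 bounds the complete sum by $C(k)q^{-1/k}$. Integrating by parts gives
\[
v(\beta)\ll\min\bigl(N^\theta,\,(|\beta|N^{k-1})^{-1}\bigr).
\]
Raising to the power $m_0$ and integrating over $\beta$ gives, for each fraction, a contribution of order
\[
q^{-m_0/k}\,N^{\theta m_0}\,N^{-(k-1+\theta)}.
\]
Summing over $a$ and over $1<q\le N^{\theta/4}$ converges since $m_0/k>2$. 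Because $q\ge2$ and the arc at $0$ is absent, the resulting quantity should then be compared with the target $N^{(m_0/k-1)\theta}$. The saving must come from the fact that on these arcs $|\beta|$ stays bounded away from $0$ only through $q\ge2$. I would try to extract the factor $q^{-m_0/k}$ together with the restriction $q\le N^{\theta/4}$ to gain the power $N^{-\theta m_0(1-1/k)}$ needed.

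On $\mathfrak{n}$ I would combine a pointwise bound with a mean value bound. Write $m_0=2s+t$ with $2s\ge k+\ldots$ chosen so that Lemma 2.4 gives
\[
\int_0^1|S|^{2s}\ll N^{\theta(2s-k)+\epsilon}.
\]
The remaining factor $\sup_{\mathfrak{n}}|S|^{t}$ is bounded through Lemma 2.3, after shifting $u=N+x$ so that $S$ becomes a Weyl sum of length $N^\theta$ in $x$ with a polynomial of degree $k$. The denominator range $N^{\theta/4}\le q$ is exactly the range in which Lemma 2.3 applies, with $X=N^\theta$ and the relevant coefficient $z\binom{k}{j}N^{k-j}$. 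This gives $|S|\ll N^{\theta(1-\sigma)+\epsilon}$ with $\sigma=19/(1600k^2\log k)$. With $m_0=[8k\log k]+1$, I then choose $t$ so that
\[
\theta(2s-k)+t\theta(1-\sigma)\le(m_0/k-1)\theta .
\]

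The main obstacle is precisely this exponent bookkeeping. The trivial mean-value exponent $\theta(m_0-k)$ is far larger than $(m_0/k-1)\theta$, because $m_0\approx8k\log k$. The whole gap of roughly $\theta m_0(1-1/k)$ must therefore be recovered from the Weyl saving on $\mathfrak{n}$ and the Hua saving on $\mathfrak{N}$. I would first try to take $2s$ as small as Lemma 2.4 permits ($2s\approx k$, so the mean value costs about $N^{\epsilon}$) and push all remaining powers onto the pointwise bound. If that proves insufficient, I would iterate Lemma 2.3 with the Vinogradov input of Lemma 2.1 at the optimal $l$, to enlarge $\sigma$ before closing the estimate.
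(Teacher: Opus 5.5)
There is a genuine gap. It sits exactly where you hope to extract a saving on $\mathfrak{N}$: that saving does not exist. Take $a/q=p^{-k}$ with $p$ a prime not dividing $k$ ($p=2$ for the odd $k$ of the paper). The sum over units modulo $p^k$ vanishes, so $q^{-1}S(p^k,1,k)=p^{-1}$. Your own approximation then gives $S(p^{-k}+\beta)=p^{-1}v(\beta)+O_k(1)$, with $|v(\beta)|\ge N^{\theta}$ for $|\beta|\le N^{-(k-1+\theta)}/(16k)$. This arc lies in $[1/N,1/2]$, hence
\[
\int_{1/N}^{1/2}|S(z)|^{m_0}\,\mathrm{d}z\gg_{k,m_0}N^{(m_0-1)\theta-k+1},\qquad (m_0-1)\theta-k+1-\Big(\frac{m_0}{k}-1\Big)\theta=(k-1)\Big(\frac{m_0\theta}{k}-1\Big).
\]
The last quantity is positive as soon as $m_0\theta>k$, which holds for every choice of $m_0$ in the paper ($m_0\ge 4k$, $\theta>1/2$). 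For bounded $q$ the factor $q^{-m_0/k}$ is just a constant, so the power $N^{-\theta m_0(1-1/k)}$ you need cannot come from it. Once you discard $e(-zN_0)$, the target bound is simply false.

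Your minor-arc bookkeeping fails independently. The inequality $\theta(m_0-k)-t\theta\sigma\le(\frac{m_0}{k}-1)\theta$ requires $\sigma\ge(1-\frac{k}{m_0})(1-\frac1k)$, which exceeds $\frac12$ for large $k$. So no improvement of $\sigma=\frac{19}{1600k^2\log k}$ through Lemma 2.1 can close it. Moreover, Dirichlet at level $N^{k-1+\theta}$ produces denominators beyond the range $q<N^{\theta(k+3/4)}$ of Lemma 2.3 when $\theta<2/3$ and $k\ge 3$.

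Nor is this an artefact of taking absolute values. Since $S(1-z)=\overline{S(z)}$, the integral over $[1/N,1-1/N]$ is twice the real part of the integral in the statement. It equals the full count minus the part from $|z|\le 1/N$, and it is governed by the arcs with $q\ge 2$: the terms $q\ge2$ of the singular series $\Psi$, times a singular integral of order $N^{(m_0-1)\theta+1-k}$. These terms do not vanish. Suppose the stated bound held for all $N_0$ in a window of length $N^{k-1+\theta/2}$ around $m_0N^k$. Summing over $N_0$ in residue classes modulo $p^k$ would then force the sums $u_1^k+\dots+u_{m_0}^k$ to be equidistributed modulo $p^k$. That contradicts $(p^{-k}S(p^k,1,k))^{m_0}=p^{-m_0}\neq 0$. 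So for $m_0\theta>k$ the bound cannot hold uniformly in $N_0$, and no method will prove it.

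The paper's argument does not get around this either. Instead of an arc decomposition, it replaces $S(z)$ by $\int_{N_1}^{N_2}e(zu^k)\,\mathrm{d}u$ on all of $[1/N,1/2]$, with claimed error $O(N^{-1}|S(z)|)$. This overlooks that the phase derivative $kzu^{k-1}$ is $\gg N^{k-2}$ there; the replacement is valid only for $|z|\ll N^{1-k}$. After rescaling it also arrives at $N^{(m_0-k)\theta}$, not the stated $N^{(\frac{m_0}{k}-1)\theta}$. The arcs at $a/q$ with small $q\ge2$ inside $[1/\tau,1/2]$ have to be kept and summed into the singular series, not bounded away.
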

\begin{proof}
First, we use the integral of exponent to replace the exponential sum. We have
\begin{align*}
&\bigg|\sum\limits_{{N_1}<u<{N_2}}{e(zu^k)}-
\int_{N_1<u<N_2}e(zu^k)\mathrm{d}u\bigg|\\
\ll&\sum\limits_{{N_1}<t<{N_2}}\bigg|e(zt^k)-\int_{t<u<t+1}e(zu^k)\mathrm{d}u\bigg|\\
\leq&\sum\limits_{{N_1}<t<{N_2}}\frac{e(zt^k)kz(k+1)}{t}=O(N^{-1}\sum\limits_{{N_1}<u<{N_2}}e(zu^k))
\end{align*}
Hence we have
\[
\int_{1/\tau}^{\frac{1}{2}}\bigg|\sum\limits_{{N_1}<u<{N_2}}{e(zu^k)}\bigg|^{m_0}\mathrm{d}z=\int_{1/\tau}^{\frac{1}{2}}\bigg|\int_{N_1<u<N_2}e(zu^k)\mathrm{d}u\bigg|^{m_0}\mathrm{d}z\,(1+O(\frac{1}{N}))
\]
Let $y=N^{\theta}z,u=N^{\theta/k}(x-N)$ and we have
\begin{align}
\int_{1/\tau}^{\frac{1}{2}}\bigg|\int_{N_1<u<N_2}e(zu^k)\mathrm{d}u\bigg|^{m_0}\mathrm{d}z=&N^{(m_0-k)\theta}\int_{N^{\theta-1}}^{\frac{N^{\theta}}{2}}\bigg|\int_{-1}^1 e(zx^k)\mathrm{d}x\bigg|^{m_0}\mathrm{d}z\\
\ll&N^{(m_0-k)\theta)}\int_{N^{\theta-1}}^{\infty}\bigg|\int_{-1}^1 e(zx^k)\mathrm{d}x\bigg|^{m_0}\ll N^{(m_0-k)\theta}
\end{align}
Therefore we prove this lemma.
\end{proof}
Therefore we know
\[
\int_{0}^{1}(\sum\limits_{{N_1}<u<{N_2}}{e(zu^k)})^{m_0}e(-zN_0)\mathrm{d}z\asymp N^{\theta(m-1)+1-k}
\]
\section{The Integral in Minor Arc}
It's time to calculate the integral in minor arc. The way to estimate the error term of throwing the primality is similar to the process of calculating the integral $I_{11}$. Due to this reason, we just show some important steps in completing an estimate like (9). We know
\[
\mathfrak{M}_2=[0,1]-\bigcup\limits_{h<q,(h,q)=1}I(h,q)
\subseteq\bigcup\limits_{Q<q<\tau}
\left\{
    a\in(0,1):|\frac{h}{q}-a|<\frac{1}{q\tau}
\right\}\quad
\]
so that
\begin{align*}
I_{21}=&\int_{\mathfrak{M}_2}S_p^{m_0}(t)e(-tN_0)\mathrm{d}t\\
\leq&\sum\limits_{Q<q<\tau}\sum\limits_{h<q,(h,q)=1}\int_{|t-h/q|<1/q\tau}S_p^{m_0}(t)e(-tN_0)\mathrm{d}t\\
=&\sum\limits_{Q<q<\tau}\sum\limits_{h<q,(h,q)=1}e(\frac{hN_0}{q})\int_{-{1}/{q\tau}<\alpha<1/q\tau}S_p^{m_0}(t+h/q)e(-tN_0)\mathrm{d}t
\end{align*}
We know
\begin{align*}
S_p(z+h/q)=&C_q(k)\int_{N_1}^{N_2}\frac{e(zu^k)}{\varphi(q)\log{u}}\mathrm{d}u+\int_{N_1}^{N_2}e(zu^k)\mathrm{d}E(u;q,l)\\
\sim&C_q(k)\sum\limits_{N_1<u<N_2}\frac{e(zu^k)}{\varphi(q)\log{u}}+\int_{N_1}^{N_2}e(zu^k)\mathrm{d}E(u;q,l)
\end{align*}
and we know
\begin{align*}
&\sum\limits_{Q<q<\tau}\sum\limits_{h<q,(h,q)=1}\int_{|\alpha-h/q|<1/q^2}S_p^{m_0}(t)e(-tN_0)\mathrm{d}t\\
\sim&\sum\limits_{Q<q<\tau}\sum\limits_{h<q,(h,q)=1}e(\frac{hN_0}{q})\int_{-{1}/{q^2}<\alpha<1/q^2}\Big(C_q(k)\sum\limits_{N_1<u<N_2}\frac{e(zu^k)}{\varphi(q)\log{u}}+\int_{N_1}^{N_2}e(zu^k)\mathrm{d}E(u;q,l)\Big)^{m_0}e(-tN_0)\mathrm{d}u
\end{align*}
The same as what we do in major arc, we know
\begin{align*}
I_{21}\leq&\sum\limits_{Q<q<\tau}C_q^{m_0}(k)C_q(N_0)\int_{-1/q\tau}^{1/q\tau}\bigg(\sum\limits_{N_1<u<N_2}\frac{e(zu^k)}{\varphi(q)\log{u}}\bigg)^{m_0}+\sum\limits_{1\leq r\leq m_0}C_{m_0}^r\bigg(\sum\limits_{N_1<u<N_2}\frac{e(zu^k)}{\varphi(q)\log{u}}\bigg)^{m_0-r}\\
&(E(u;q,l)+E(u_0;q,l))^r(e(zN_1^k)-e(zN_2^k))^re(-N_0z)\mathrm{d}z
=I_{211}^*+I_{212}^*
\end{align*}
Where
\[
I_{211}^*=\sum\limits_{Q<q<\tau}C_q^{m_0}(k)C_q(N_0)\int_{-1/q\tau}^{1/q\tau}(\sum\limits_{{N_1}<u<{N_2}}\frac{e(zu^k)}{\log{u}})^{m_0}e(-zN_0)\mathrm{d}t
\]
and
\begin{align*}
I_{212}^*=&\sum\limits_{Q<q<\tau}C_q^m(k)C_q(N_0)\int_{-1/q\tau}^{1/q\tau}
\sum\limits_{1\leq r\leq m_0}C_{m_0}^r(\sum\limits_{N_1<u<N_2}
\frac{e(zu^k)}{\varphi(q)\log{u}})^{m_0-r}\\
 &(E(u;q,l)+E(u_0;q,l))^r(e(zN_1^k)-e(zN_2^k))^r
e(-N_0z)\mathrm{d}z
\end{align*}
So we have
For $I_{111}^*$, we know
\begin{align}
\bigg|\sum\limits_{N_1<u<N_2}\frac{e(zu^k)}{\log{u}}-\sum\limits_{N_1<u<N_2}\frac{e(zu^k)}{\log{N}}\bigg|
=&\bigg|\sum\limits_{N_1<u<N_2}{e(zu^k)}(\frac{1}{\log{u}}-\frac{1}{\log{N}})\bigg|\\
\ll&\frac{N^{\theta-1}}{(\log{N})^{2}}\sum\limits_{N_1<u<N_2}e(zu^k)
\end{align}
So we have
\begin{align*}
I_{211}^*=\sum\limits_{Q<q<\tau}C_q^{m_0}(k)C_q(N_0)\int_{-1/q\tau}^{1/q\tau}(\sum\limits_{{N_1}<u<{N_2}}\frac{e(zu^k)}{\log{N}})^{m_0}e(-zN_0)\mathrm{d}z+O(N^{(2\theta-1)m_0}(\log N)^{-2m_0})
\end{align*}
and for another part, we know
\begin{align*}
I_{212}^*=
&\sum\limits_{1\leq r\leq m_0}C_{m_0}^r\sum\limits_{Q<q<\tau}C_q^m(k)C_q(N_0)\int_{-1/q\tau}^{1/q\tau}\bigg(\sum\limits_{N_1<u<N_2}\frac{e(zu^k)}{\varphi(q)\log{u}}\bigg)^{m_0-r}\\[2pt]
&\bigg|E(u;q,l)+E(u_0;q,l)\bigg|^r(e(zN_1^k)-e(zN_2^k))^r
e(-N_0z)\mathrm{d}z\\[2pt]
&\ll
\sum\limits_{1\leq r\leq m_0}2^rC_{m_0}^r\sum\limits_{Q<q<\tau}\Big|C_q^m(k)C_q(N_0)(\varphi(q)\log{N})^{m_0-r}\Big|\\[2pt]
&\int_{-1/q\tau}^{1/q\tau}\bigg
(\sum\limits_{N_1<u<N_2}e(zu^k)\bigg)^{m_0}
(E(u;q,l)+E(u_0;q,l))^r\mathrm{d}z(1+O(N^{\theta-1}(\log{N})^{-2}))
\end{align*}
and
\begin{align*}
I_{112}^*&\ll \max\limits_j\sum\limits_{Q<q<\tau}C_q^m(k)C_q(N_0)(\varphi(q)\log{N})^{m_0-r}\int_{-1/q\tau}^{1/q\tau}\bigg
(\sum\limits_{N_1<u<N_2}e(zu^k)\bigg)^{m_0}\\[2pt]
&\bigg|E(u;q,l)+E(u_0;q,l)\bigg|^r\mathrm{d}z+O(N^{(2\theta-1)m_0}(\log N)^{-2m_0})
\end{align*}
From Holder inequality we know
\begin{align*}
&\int_{-1/q\tau}^{1/q\tau}\bigg
(\sum\limits_{N_1<u<N_2}e(zu^k)\bigg)^{m_0}
\bigg|E(u;q,l)+E(u_0;q,l)\bigg|^r\mathrm{d}z\\
\ll&\bigg(\int_{-1/q\tau}^{1/q\tau}\bigg
|\sum\limits_{N_1<u<N_2}e(zu^k)\bigg|^{k_1m_0}\mathrm{d}z\bigg)^{1/k_1}\bigg(\int_{-1/q\tau}^{1/q\tau}\bigg
|E(u;q,l)+E(u_0;q,l)\bigg|^{k_2r}\mathrm{d}z\bigg)^{1/k_2}
\end{align*}
By using Holder inequality once again, we know we can choose that $k_{12}^{-1}+k_{22}^{-1}=1$, and
\begin{align*}
&\sum\limits_{Q<q<\tau}\Big|C_q^m(k)C_q(N_0)(\varphi(q)\log{N})^{m_0-r}\Big|\int_{-1/q\tau}^{1/q\tau}\bigg
(\sum\limits_{N_1<u<N_2}e(zu^k)\bigg)^{m_0}
\bigg|E(u;q,l)+E(u_0;q,l)\bigg|^r\mathrm{d}z\\
\ll&\sum\limits_{Q<q<\tau}\Big|C_q^m(k)C_q(N_0)(\varphi(q)\log{N})^{m_0-r}\Big|\bigg(\int_{-1/q\tau}^{1/q\tau}\bigg
(\sum\limits_{N_1<u<N_2}e(zu^k)\bigg)^{k_1m_0}\mathrm{d}z\bigg)^{1/k_{12}}\\
\times&\bigg(\int_{-1/q\tau}^{1/q\tau}\bigg
|E(u;q,l)+E(u_0;q,l)\bigg|^{k_{22}r}\mathrm{d}z\bigg)^{1/k_{22}}
\end{align*}
Then we know
\begin{align*}
I_{112}^*\ll&\bigg|\sum\limits_{Q<q<\tau}|C_q^{mw_{12}}(k)C_q^{w_{12}}(N_0)\varphi^{(m_0-r)w_{12}}(q)|\bigg|^{1/w_{12}}\\
\times&\bigg(\sum\limits_{Q<q<\tau}\bigg(\int_{-1/\tau}^{1/\tau}\bigg|\sum\limits_{N_1<u<N_2}e(zu^k)\bigg|^{k_1m_0}\mathrm{d}z\bigg)^{w_{22}/k_{12}}\bigg)^{1/w_{22}}\\
\times&\bigg(\sum\limits_{Q<q<\tau}\bigg(\int_{-1/q\tau}^{1/q\tau}\bigg
|E(u;q,l)+E(u_0;q,l)\bigg|^{k_2r}\mathrm{d}z\bigg)^{w_{32}/k_{12}}\bigg)^{1/w_{32}}
\end{align*}
and here we use Holder inequality again, where $w_{12}^{-1}+w_{22}^{-1}+w_{32}^{-1}=1$. Here we know that $I_{112}^*\ll T_{12}^{1/w_{12}}T_{22}^{1/w_{22}}T_{32}^{1/w_{32}}$, where
\begin{align*}
\qquad&T_{12}=\bigg|\sum\limits_{Q<q<\tau}|C_q^{mw_{12}}(k)C_q^{w_{12}}(N_0)\varphi^{(r-m_0)w_{12}}(q)|\bigg|\\
&T_{22}=\sum\limits_{Q<q<\tau}\bigg(\int_{-1/q\tau}^{1/q\tau}\bigg|\sum\limits_{N_1<u<N_2}e(zu^k)\bigg|^{k_{12}m_0}\mathrm{d}z\bigg)^{w_{22}/k_{12}}\\
&T_{32}= \sum\limits_{Q<q<\tau}\bigg(\int_{-1/q\tau}^{1/q\tau}\bigg
|E(u;q,l)+E(u_0;q,l)\bigg|^{k_{12}r}\mathrm{d}z\bigg)^{w_{32}/k_{22}}
\end{align*}
From lemma 2.4 and $\varphi(q)\ll q(\log{\log{q}})^{-1}$ we have
\begin{align*}
T_{12}\ll&\sum\limits_{Q<q<\tau}\Big|q^{(1-\frac{1}{k})m_0w_{12}+\frac{w_{12}}{2}}\varphi^{w_{12}(r-m_0)}(q)\Big|\\
\ll&\sum\limits_{Q<q<\tau}q^{(1-\frac{1}{k})m_0w_{12}+\frac{w_{12}}{2}+w_{12}(r-m_0)}(\log{\log{q}})^{w_{12}(r-m_0)}\\
\ll&\tau^{-\frac{1}{k}m_0w_{12}+(\frac{1}{2}+r)w_{12}+1}(\log{\log{\tau}})^{w_{12}(m_0-r)}
\end{align*}
From lemma 2.4 we have
\begin{align*}
\qquad T_{22}\ll&\sum\limits_{Q<q<\tau}(\frac{2}{\tau})^{w_{22}/k_{12}}\bigg(\int_{-1/2<z<1/2}\bigg|\sum\limits_{N_1<u<N_2}e(zu^k)\bigg|^{k_{12}m_0}\mathrm{d}z\bigg)^{w_{22}/k_{12}}\\
=&\sum\limits_{Q<q<\tau}(\frac{2}{\tau})^{w_{22}/k_{12}}\bigg(\int_{0<z<1}\bigg|\sum\limits_{N_1<u<N_2}e(zu^k)\bigg|^{k_{12}m_0}\mathrm{d}z\bigg)^{w_{22}/k_{12}}\\
\ll&\sum\limits_{Q<q<\tau}(\frac{2}{\tau})^{w_{22}/k_{12}}N^{\epsilon}(N^{\theta(k_{12}m_0-k)}+N^{\theta(k_{12}m_0/2)})^{w_{22}/k_{12}}\ll\tau^{{w_{22}}/{k_{12}}-1}N^{\epsilon}\max\left\{N^{\theta(k_{12}m_0-k)},N^{\theta(k_{12}m_0/2)}\right\}
\end{align*}
From lemma 2.6 we have
\begin{align*}
\qquad T_3\ll& \sum\limits_{Q<q<\tau}\bigg(\int_{-1/q\tau}^{1/q\tau}\max\limits_{u}\big
|E(u;q,l)\big|^{k_{22}r}\mathrm{d}z\bigg)^{w_{32}/k_{22}}\\
\ll&\sum\limits_{Q<q<\tau}(\frac{2}{q\tau})^{w_{32}/k_{22}}\max\limits_{u}\big
|E(u;q,l)\big|^{w_{32}r}\ll N^{w_{32}r/2}Q^{-w_{32}/k_{22}+1}\tau^{-w_{32}/k_{22}+1}
\end{align*}
Thus we have
\begin{align*}
I_{112}^*\ll&\tau^{-\frac{m_0}{k}+\frac{1}{2}-\frac{1}{k_{22}}+\frac{1}{k_{12}}-\frac{1}{w_{22}}+\frac{1}{w_{32}}+1/w+r}N^{\epsilon+\frac{r}{2}+g(\theta)(-1/k_{22}+1/w_{32})}(N^{\theta(k_{12}m_0-   k)}+N^{\theta(k_{12}m_0/2)})^{1/k_{12}}\\
\ll&\exp\bigg(((-\frac{m_0}{k}+\frac{1}{2}+r)(k+\theta-\frac{3}{2})+(\frac{r}{2k_{12}}g(\theta)(-1/k_{22}+1/w_{32}))\\
&\max\big(\theta(k_{12}m_0-k),\theta(k_{12}m_0/2)\big)-2(k-\theta+\frac{3}{2}))\log N\bigg)
\end{align*}
Setting
\[
G^*=\min_{k_{12},w_{}}\max\limits_{1\leq r\leq m_0}\left\{(-\frac{m_0}{k}+\frac{1}{2}+r)(k+\theta-\frac{3}{2})+\frac{r}{2}(g(\theta)(-\frac{1}{k_{22}}+\frac{1}{w_{32}}))\max\big(\theta(m_0-\frac{k}{k_{12}}),\frac{m_0\theta}{2}\big)\right\}
\]
where $g(\theta)=\theta-1/2$. Setting $k_{22}=1,k_{12}=\infty$ we know
\[
G^*=(-\frac{m_0}{k}+\frac{1}{2})(k+\theta-\frac{3}{2})-\frac{m_0}{2}(\theta-\frac{1}{2})\theta<0
\]
Thus we know the error of transforming the primality can be controlled. We can see the transform error of $I_{21}$ is less than $I_{212}^*$, thus we know
\[
\int_{\mathfrak{M}_2}S_p^{m_0}(t)e(-tN_0)\mathrm{d}t\sim\int_{\mathfrak{M}_2}S^{m_0}(t)e(-tN_0)\mathrm{d}t
\]
It's obvious that
\begin{align*}
&\Bigg|\int_{\mathfrak{M}_2}S^{m_0}(t)e(-tN_0)\mathrm{d}t\Bigg|\\
\leq&N^{k\theta(1-(1-\frac{1}{k})^{m_1})}\sum\limits_{N-P_1<v_1,r_1<N+P_1}...\sum\limits_{N-P_{m_1}<v_{m_1},r_{m_1}<N+P_{m_1}}\int_{\mathfrak{M}_2}\big|S^{m_0}(t)\big|\mathrm{d}t
\end{align*}
From lemma 2.3 we have
\[
\int_{\mathfrak{M}_2}\big|S^{m_0}(t)\big|\mathrm{d}t\ll N^{(1-\frac{19}{1600k^2\log{k}}+\epsilon)\theta}
\]
Hence we have
\[
I_{21}\leq(\log N)^{-m_0}N^{k\theta(1-(1-\frac{1}{k})^{m_1})}\times N^{(1-\frac{19}{1600k^2\log{k}}+\epsilon)m\theta}
\]
\section{Final Conclusion}
Here we will combine all of our results together. From the estimate of Section 3, we know
\[
I_{11}\asymp N^{2k\theta(1-1(1-\frac{1}{k})^{m_1})}\Delta(Q)\Psi(N)(\log{N})^{-m_0}\times I_{12}^*
\]
and
\[
I_{12}^*\sim N^{\theta(m-1)+1-k}
\]
Hence we know
\[
I_{11}\asymp N^{2k\theta(1-(1-\frac{1}{k})^{m_1})+\theta(m-1)+1-k}\Delta(Q)\Psi(N)(\log{N})^{-m_0}
\]
and the error term of $I_{21}$ is
\[
O(N^{(2\theta-1)m_0+2k\theta(1-1(1-\frac{1}{k})^{m_1})}(\log N)^{-2m_0})
\]
For $I_{21}$ we know 
\[
I_{21}\leq(\log N)^{-m_0}N^{(1-\frac{19}{1600k^2\log{k}}+\epsilon)m\theta+k\theta(1-(1-\frac{1}{k})^{m_1})}
\]
By choosing $m_0=4k,m_1=[8k\log k]+1$, and we know
\[
(1-\frac{1}{k})^{m_1}<\frac{19}{1600k^2\log k}(k\geq 3)
\]
Thus we know
\[
I_{21}\leq (\log N)^{-m_0}N^{(16k\log k+4k+2)(1-(1-\frac{1}{k})^{m_1})\theta}
\]
and
\begin{align*}
I_{11}\asymp&N^{2k\theta(1-1(1-\frac{1}{k})^{m_1})+\theta(m-1)+1-k}\Delta(Q)\Psi(N)(\log{N})^{-m_0}\\
\asymp&N^{2k\theta(1-(1-\frac{1}{k})^{m_1}))+\theta(16k\log k+4k+2)+1-k}(\log N)^{-m_0}
\end{align*}
Let $m,k$ to be two sufficiently large number, we know
\[
(1-\frac{1}{k})^{m_1}\asymp\exp(8k\log k\log(1-\frac{1}{k}))=k^{-8}(1+O(\frac{\log k}{k}))
\]
Hence we know
\begin{align*}
&k\theta(1-(1-\frac{1}{k})^{m_1}))+\theta(16k\log k+4k+2)+1-k\\
\geq&k\theta(1-\frac{1}{k^{8m_1}})+16\theta k\log k+(4\theta-1)+1+2\theta+O(\log k)
\end{align*}
and
\[
(16k\log k+4k+2)(1-(1-\frac{1}{k})^{m_1})\theta\sim(16k\log k+4k+2)(1-\frac{1}{k^{8m_1}})\theta+O(\log k)
\]
Thus we prove the Theorem 1.1.
\bibliographystyle{plain}
\bibliography{References}

@article{bourgain2016proofmainconjecturevinogradovs, title={Proof of the main conjecture in Vinogradov's mean value theorem for degrees higher than three}, author={Jean Bourgain and Ciprian Demeter and Larry Guth},
year={2016},
journal={Ann.Math.},
pages={633-682(2)},
}

@article{wooley2015discretefourierrestrictionefficient,
      title={Discrete Fourier restriction via Efficient Congruencing}, 
      author={Trevor D. Wooley},
      year={2017},
journal={Int.Math.Res.Not},
pages={1342-1389(5)},
}

@article{wang2022waringgoldbachproblemshortintervals,
      title={Waring-Goldbach problem in short intervals}, 
      author={Mengdi Wang},
      year={2022},
      journal={Isr.J.Math},
      pages={637-639(2)}
      }

@article{wooley2015perturbationsweylsums,
      title={Perturbations of Weyl sums}, 
      author={Trevor D. Wooley},
      year={2016},
journal={Int.Math.Res.Not.},
pages={2632-2646(9)},
}

@article{chen2020restrictedmeanvaluetheorems,
      title={Restricted mean value theorems and metric theory of restricted Weyl sums}, 
      author={Changhao Chen and Igor E. Shparlinski},
      year={2021},
    journal={Q.J.Math.},
pages={885-919(3)},
}

@article{wei2014sumspowersequalprimes,
      title={On Sums of Powers of Almost Equal Primes}, 
      author={Bin Wei and Trevor D. Wooley},
      year={2015},
      journal={Proc. Lond. Math. Soc.},
      pages={1130-1162(3)}
      }

@book{Hua1963,
      title={Additive Theory of Prime Numbers}, 
      author={Hua,L.K},
      year={1965},
}

@article{Weil1948,
      title={On some exponential sums}, 
      author={A.Weil},
      year={1948},
    journal={Proc.Natl.Acad.Sci.USA},
pages={204-207},
}

@book{PP1986,
      title={The fundamental of Analytic Number Theory}, 
      author={Pan,C.D. and Pan,C.B.},
      year={1986},
}

@book{G.Tenenbaum,
      title={Introduction to Analytic and Probabilistic Number Theory}, 
      author={G.Tenenbaum},
      year={2015},
}

@misc{oh2025extendedvinogradovsmeanvalue,
      title={An extended Vinogradov's mean value theorem}, 
      author={Changkeun Oh and Kiseok Yeon},
      year={2025},
      eprint={2506.01751},
      archivePrefix={arXiv(to appear in Transactions of the American Mathematical Society)},
      note={arxiv:2506.01751,to appear in Transactions of the American Mathematical Society},}

@article{shparlinski2021weylsumsintegersdigital,
      title={Weyl sums over integers with digital restrictions}, 
      author={Igor E. Shparlinski and Jörg M. Thuswaldner},
      year={2024},
      journal={Mich.Math.J(1)},
      pages={189-214}
      }

@article{wooley2014cubiccasemainconjecture,
      title={The cubic case of the main conjecture in Vinogradov's mean value theorem}, 
      author={Trevor D. Wooley},
      year={2016},
journal={Adv.Math.},
pages={532-561},}

@article{bruedern2024estimatessmoothweylsums,
      title={Estimates for smooth Weyl sums on major arcs}, 
      author={Joerg Bruedern and Trevor D. Wooley},
      year={2024},
      journal={Int.Math.Res.Not.},
      pages={14662-14668(24)},}

@article{matomäki2019discorrelationprimesshortintervals,
      title={Discorrelation between primes in short intervals and polynomial phases}, 
      author={Kaisa Matomäki and Xuancheng Shao},
      year={2021},
      journal={Int. Math. Res. Not.},
      pages={12330-12355(16)},
      }

@article{huang2016exponentialsumsprimesshort,
      title={Exponential sums over primes in short intervals and an application to the Waring--Goldbach problem}, 
      author={Bingrong Huang},
      year={2016},
      journal={mathematika},
      pages={508-523(2)},}

@article{BDH,
      title={Simple Barban-Davenport-Halberstam type asymptotics for general sequences}, 
      author={Harper, Adam J.},
      year={2025},
      journal={J.Lond.Math.Soc},
      pages={ 112, No. 4, Article ID e70293},}
\nocite{*}
\end{document}